\numberwithin{equation}{section}
\newcommand{\R}{{\mathbb R}}
\newcommand{\intr}{\int\limits_{\R^N}}
\theoremstyle{plain}
\newtheorem{proposition}{Proposition}[section]
\newtheorem{theorem}{Theorem}[section]
\newtheorem{lemma}{Lemma}[section]
\theoremstyle{definition}
\begin{document}

\title{\vskip-0.3in A study of biharmonic equation involving nonlocal terms and critical Sobolev exponent.}

\author{Gurpreet Singh\footnote{School of Mechanical and Manufacturing Engineering, Dublin City University; {\tt gurpreet.bajwa2506@gmail.com}}}

\maketitle

\begin{abstract}
In this paper, we investigate the existence of ground state solutions and non-existence of non-trivial weak solution of 
\begin{equation*}
\Delta^{2} u= \Big(|x|^{-\theta}*|u|^{p_{\theta}}\Big)|u|^{p_{\theta}-2}u+\alpha\Big(|x|^{-\gamma}*|u|^{p}\Big)|u|^{p-2}u \quad\mbox{ in }\R^{N},
\end{equation*}
where $0<p\leq p_{\gamma}^{*}$, $\alpha>0$, $\theta, \gamma \in (0,N)$, $p_{\theta}=\frac{2(N-\theta)}{N-4}$ and $N\geq 5$. Firstly, we prove the non-existence by establishing Pohozaev type of identity. Next, we study the existence of ground state solutions by using the minimization method on the associated Nehari manifold.
\end{abstract}

\noindent{\bf Keywords:} Biharmonic problem, Choquard Equation, Pohozaev identity, ground state solution

\noindent{\bf MSC 2010:} 35A15, 35B20, 35Q40, 35Q75


\section{Introduction}\label{sec1}
We study the existence of ground state solutions and non-existence of non-trivial weak solution for the following problem
\begin{equation}\label{fr}
\Delta^{2} u= \Big(|x|^{-\theta}*|u|^{p_{\theta}}\Big)|u|^{p_{\theta}-2}u+\alpha\Big(|x|^{-\gamma}*|u|^{p}\Big)|u|^{p-2}u \quad\mbox{ in }\R^{N},
\end{equation}
where $\alpha> 0$, $\theta, \gamma \in (0,N)$, $p_{\theta}=\frac{2(N-\theta)}{N-4}$ and $N\geq 5$. Here, $p$ satisfies $0<p\leq p_{\gamma}^{*}$and $p_{\gamma}^{*}= \frac{2N-\gamma}{N-4}$ is the critical Sobolev exponent. $\Delta^{2}= \Delta(\Delta)$ is the biharmonic operator and $|x|^{-\xi} $ is the Riesz potential of order $\xi \in (0, N)$. 

The case when $\alpha=0$, \eqref{fr} becomes the biharmonic Choquard equation
\begin{equation}\label{squ}
\Delta^{2} u= \Big(|x|^{-\theta}*|u|^{p_{\theta}}\Big)|u|^{p_{\theta}-2}u \quad\mbox{ in }\R^{N}.
\end{equation}

In the last few decades, the Choquard equation has received a great attention and has been appeared in many different contexts and settings(see \cite{AFY2016, AGSY2017, GT2016, MV2017, MS2017}). The following Choquard or nonlinear Schr\"odinger-Newton equation
\begin{equation}\label{ce}
-\Delta u+ u= (|x|^{-2}*u^2)u \quad\mbox{ in }\R^{N},
\end{equation}
was first considered by Pekar\cite{P1954} in 1954 for $N=3$. In 1996, Penrose had used the equation \eqref{ce} as a model in self-gravitating matter(see \cite{P1996}, \cite{P1998}). The stationary Choquard equation
$$
-\Delta u+ V(x)u= (|x|^{-\theta}*|u|^b)|u|^{b-2}u \quad\mbox{ in }\R^{N},
$$
arises in quantum theory and in the theory of Bose-Einstein condensation. 

Equations involving biharmonic operator arise in many real life phenomena such as in biophysics, continuum mechanics, differential geometry and many more. For example, in the modeling of thin elastic plates, clamped plates and in the study of Paneitz-Branson equation and the Willmore equation(see \cite{GGS2010}). As we could not apply maximum principle for the biharmonic operator which makes the problems involving biharmonic operator even more interesting from the mathematical point of view(see \cite{BG2011, JC2014, PS2012, PS2014, YT2013, ZTZ2014})

In the recent years, the biharmonic equations has received a considerable attention. In \cite{CD2018}, Cao and Dai studied the following biharmonic equation with Hatree type nonlinearity
\begin{equation*}
\Delta^{2} u= \Big(|x|^{-8}*|u|^2\Big)|u|^{b}, \quad\mbox{ for all } x\in \R^d,
\end{equation*}
where $0< b\leq 1$ and $d\geq 9$. The authors applied the methods of moving plane and proved that the non-negative classical solutions are radially symmetric. The authors also studied the non-existence of non-trivial non-negative classical solutions in subcritical case $0<b<1$.

Micheletti and Pistoiain \cite{MP1998} has investigated the following problem
\begin{equation*}
\left\{
\begin{aligned}
\Delta^{2} u+c\Delta u&= f(x, u) &&\quad\mbox{ in } \Omega,\\
u&= \Delta u= 0 &&\quad\mbox{ on } \partial{\Omega},
\end{aligned}
\right.
\end{equation*}
where $\Omega$ is smooth bounded domain in $\R^N$. The authors obtained the multiple non-trivial solutions by using the mountain pass theorem. Existence of infinitely many sign-changing solutions of the above problem had been studied by Zhao and Xu in \cite{ZW2008} by the use of critical point theorem.

\smallskip

In this research article, we investigate the existence of ground state solutions and non-existence of non-trivial weak solution to \eqref{fr}. Now, in the subsection below we provide some notations which we will using throught this paper, variational framework and main results. 

\smallskip

\subsection{Notations and Variational Framework}

\subsection*{Notations.} In this paper, we will be using the following notations.
\begin{itemize}
\item $H_{0}^{2}(\R^N)= W_{0}^{2,2}(\R^N)$ is the Hilbert-Sobolev space endowed with the inner product
$$
<u, v>_{H_{0}^{2}(\R^N)}= \intr \Delta u \Delta v dx,
$$
and norm
$$
||u||_{H_{0}^{2}(\R^N)}= \Big( \intr |\Delta u|^2 dx \Big)^{\frac{1}{2}}.
$$
\item $L^{t}(\R^N)$ denotes the usual Lebesgue space in $\R^N$ of order $s\in [1 , \infty]$ whose norm will be denoted by $||.||_{t}$.
\item $\hookrightarrow$ denotes the continuous embeddings.
\end{itemize}

\medskip

{\bf Note: }The embedding
$$
H_{0}^{2}(\R^N) \hookrightarrow L^{t}(\R^N),
$$
is continuous for all $1\leq t\leq \frac{2N}{N-4}$.

\subsection*{Variational Framework.}
\begin{itemize}
\item We will require the following Hardy-Littlewood-Sobolev inequality
\begin{equation}\label{hli}
\Big| \int_{\R^N} \Big(|x|^{-\xi}*u\Big)v \Big| \leq C\|u\|_r \|v\|_s,	
\end{equation}
for $\delta \in (0, N)$, $\mu \geq 0$, $u\in L^{r}(\R^N)$ and $v\in L^{s}(\R^N)$ such that
$$
\frac{1}{r}+ \frac{1}{s}+ \frac{\xi}{N}= 2.
$$

\item We assume that $p_{\theta}$ satisfies 
\begin{equation}\label{p}
\frac{2N-\theta}{2N}< p_{\theta}< \frac{2N-\theta}{N-4},
\end{equation}
and $p$ satisfies
\begin{equation}\label{p1}
\frac{2N-\gamma}{2N}< p\leq p_{\gamma}^{*}.
\end{equation}

\item We could easily notice that the equation \eqref{fr} has variational structure. Now, let us define the energy functional ${\mathcal F}_\alpha:H_{0}^{2}(\R^{N}) \rightarrow \R$ by
\begin{equation}\label{fr1}
\begin{aligned}
{\mathcal F}_\alpha(u)&=\frac{1}{2}\|u\|_{H_{0}^{2}(\R^{N})}^{2}-\frac{1}{2p_{\theta}}\int_{\R^{N}}\Big(|x|^{-\theta}*|u|^{p_{\theta}}\Big)|u|^{p_{\theta}}-\frac{\alpha}{2p}\int_{\R^{N}}\Big(|x|^{-\gamma}*|u|^{p}\Big)|u|^{p}.
\end{aligned}
\end{equation}
\end{itemize}
By using \eqref{p} and \eqref{p1} together with the Hardy-Littlewood-Sobolev inequality \eqref{hli} we get that the energy functional ${\mathcal F}_\alpha$ is well defined  and ${\mathcal F}_\alpha\in C^1(H_{0}^{2}(\R^{N}))$. Also, solution of \eqref{fr} is a critical point of the energy functional  ${\mathcal F}_\alpha$.

\subsection{Main Results.} 

\subsection*{Non-existence ($p< p_{\gamma}^{*})$}
Firstly, we study the non-existence of non-trivial weak solution to \eqref{fr}. By weak solution of \eqref{fr} one could understand that there exists $u\in H_{0}^{2}(\R^N)$, $u\neq 0$ and 
$$
\intr \Delta u \Delta v dx= \int_{\R^{N}}\Big(|x|^{-\theta}*|u|^{p_{\theta}-2}\Big)|u|^{p_{\theta}-2}v dx-\int_{\R^{N}}\Big(|x|^{-\gamma}*|u|^{p-2}\Big)|u|^{p-2}v dx,
$$
for all $v\in C_{c}^{\infty}(\R^N)$. 

Now, we present our main result on non-existence of non-trivial weak solution.

\begin{theorem}\label{nonexis}
Let $N\geq 5$, $\alpha> 0$, $\frac{2N-\gamma}{2N}< p< p_{\gamma}^{*}$ and $p_{\theta}$ satisfies \eqref{p}. If $u\in H_{0}^{2}(\R^N)$ is a weak solution of \eqref{fr}, then $u\equiv 0$.
\end{theorem}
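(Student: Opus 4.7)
The plan is to compare two integral identities satisfied by any weak solution $u\in H_0^2(\R^N)$ of \eqref{fr}: a Nehari-type identity obtained by testing against $u$, and a Pohozaev-type identity obtained by testing against $x\cdot\nabla u$. Under the strict subcriticality $p<p_\gamma^*$ the two identities are linearly independent, and their combination forces both nonlocal integrals, and hence $\|u\|_{H_0^2(\R^N)}$, to vanish.

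\textbf{Nehari side.} Testing \eqref{fr} with $u$ itself, which is legitimate because the conditions \eqref{p}, \eqref{p1} together with \eqref{hli} guarantee that both nonlinear terms define continuous linear functionals on $H_0^2(\R^N)$, yields
\[
\|u\|_{H_0^2(\R^N)}^2 = \mj_\theta(u) + \alpha\,\mj_\gamma(u),
\]
where I abbreviate $\mj_\theta(u)=\intr(|x|^{-\theta}*|u|^{p_\theta})|u|^{p_\theta}\,dx$ and $\mj_\gamma(u)=\intr(|x|^{-\gamma}*|u|^{p})|u|^{p}\,dx$.

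\textbf{Pohozaev side.} Pairing \eqref{fr} with $x\cdot\nabla u$, the biharmonic identity $\intr\Delta^2 u\,(x\cdot\nabla u)\,dx=-\tfrac{N-4}{2}\intr|\Delta u|^2\,dx$ combined with the scaling relation
\[
\intr(|x|^{-\mu}*|u|^q)|u|^{q-2}u\,(x\cdot\nabla u)\,dx = \frac{\mu-2N}{2q}\intr(|x|^{-\mu}*|u|^q)|u|^q\,dx
\]
should give
\[
\frac{N-4}{2}\|u\|_{H_0^2(\R^N)}^2 = \frac{2N-\theta}{2p_\theta}\mj_\theta(u) + \alpha\,\frac{2N-\gamma}{2p}\mj_\gamma(u).
\]
The main technical obstacle is to justify this rigorously when $u$ is only a priori in $H_0^2(\R^N)$: I would first bootstrap via elliptic regularity for $\Delta^2$, exploiting that the Riesz convolutions $|x|^{-\theta}*|u|^{p_\theta}$ and $|x|^{-\gamma}*|u|^p$ gain smoothness relative to $u$, and then multiply the equation by $(x\cdot\nabla u)\,\eta(x/R)$ with $\eta$ a smooth cutoff, integrate by parts, and pass to the limit $R\to\infty$. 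Equivalently, the nonlocal scaling factor is obtained at the variational level by differentiating ${\mathcal F}_\alpha(u_\lambda)$ at $\lambda=1$ for $u_\lambda(x)=u(\lambda x)$, which again demands sufficient decay to make the differentiation legitimate.

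\textbf{Combination and conclusion.} Subtracting $\tfrac{N-4}{2}$ times the Nehari identity from the Pohozaev identity cancels $\|u\|_{H_0^2(\R^N)}^2$ and leaves
\[
0 = \Big(\frac{2N-\theta}{2p_\theta}-\frac{N-4}{2}\Big)\mj_\theta(u) + \alpha\Big(\frac{2N-\gamma}{2p}-\frac{N-4}{2}\Big)\mj_\gamma(u).
\]
With $p_\theta=\frac{2(N-\theta)}{N-4}$ the first coefficient simplifies to $\frac{(N-4)\theta}{4(N-\theta)}>0$ because $\theta\in(0,N)$ and $N\ge 5$, while the hypothesis $p<p_\gamma^*=\frac{2N-\gamma}{N-4}$ makes the second coefficient $\frac{2N-\gamma-p(N-4)}{2p}>0$. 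Both $\mj_\theta(u)$ and $\mj_\gamma(u)$ are nonnegative, so each must vanish; plugging back into the Nehari identity yields $\|u\|_{H_0^2(\R^N)}^2=0$, hence $u\equiv 0$.
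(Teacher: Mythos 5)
Your proposal is correct and follows essentially the same route as the paper: a Pohozaev-type identity obtained by pairing \eqref{fr} with $x\cdot\nabla u$ (justified through a cutoff/limiting argument), combined with the identity from testing with $u$ itself, and then a sign comparison using \eqref{p} and the strict subcriticality $p<p_\gamma^*$. Your final bookkeeping --- both coefficients positive, both nonlocal integrals nonnegative, hence both vanish and $\|u\|_{H_0^2(\R^N)}=0$ --- is in fact a cleaner rendering of the paper's concluding step, which reaches the same contradiction in a less transparent form.
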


\smallskip

Next, we investigate the existence of ground state solutions for the equation \eqref{fr}.

\subsection*{Existence $(p=p_{\gamma}^{*})$}
Define the Nehari manifold associated with the energy functional $F_{\alpha}$ by 
\begin{equation}\label{nm}
{\mathcal N_\alpha}=\{u\in H_{0}^{2}(\R^N)\setminus\{0\}: \langle {\mathcal F}_\alpha'(u),u\rangle=0\},
\end{equation}
and the ground state solutions will be obtained as minimizers of
$$
b_{\alpha}=\inf_{u\in {\mathcal N_\alpha}}{\mathcal F}_\alpha(u).
$$
We present the main result on ground state solutions.
\begin{theorem}\label{gstate}
Assume that $N\geq 5$, $2\theta< \gamma$, $p_{\theta}> p_{\gamma}^{*}> 1$ and $\alpha> 0$. Further, if $p_{\theta}$ satisfies \eqref{p}, then the equation \eqref{fr} has a ground state solution $u\in H_{0}^{2}(\R^{N})$. 
\end{theorem}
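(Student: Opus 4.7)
The plan is to obtain a ground state as a minimizer of $\mathcal{F}_\alpha$ on the Nehari manifold $\mathcal{N}_\alpha$. Since $p=p_\gamma^*$ is critical in the Hardy-Littlewood-Sobolev sense, the essential obstruction is the loss of compactness of the $\gamma$-nonlinearity at a critical threshold $b^*$; the strictly subcritical term of exponent $p_\theta<(2N-\theta)/(N-4)$ will be exploited to push $b_\alpha$ strictly below $b^*$ and thereby restore enough compactness to close the argument.

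First I would analyze the fibering structure. For each $u\in H_0^2(\R^N)\setminus\{0\}$ the fiber map $g_u(t):=\mathcal{F}_\alpha(tu)$ has a unique positive critical point $t_u>0$ and attains its strict global maximum there, because $2p_\theta>2$ and $2p_\gamma^*>2$. Thus $t_u u\in\mathcal{N}_\alpha$, and $\mathcal{N}_\alpha$ is a natural $C^1$ constraint on which $\|u\|_{H_0^2(\R^N)}$ is uniformly bounded below (via HLS and the embedding $H_0^2(\R^N)\hookrightarrow L^{2N/(N-4)}(\R^N)$), so $b_\alpha>0$. Eliminating the critical term through the Nehari identity gives, for $u\in\mathcal{N}_\alpha$,
\[
\mathcal{F}_\alpha(u)=\Big(\tfrac{1}{2}-\tfrac{1}{2p_\gamma^*}\Big)\|u\|_{H_0^2(\R^N)}^2+\Big(\tfrac{1}{2p_\gamma^*}-\tfrac{1}{2p_\theta}\Big)\!\int_{\R^N}\!\big(|x|^{-\theta}*|u|^{p_\theta}\big)|u|^{p_\theta},
\]
and since $p_\theta>p_\gamma^*$ both coefficients are strictly positive. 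This yields coercivity of $\mathcal{F}_\alpha$ on $\mathcal{N}_\alpha$, so any minimizing sequence $(u_n)\subset\mathcal{N}_\alpha$ for $b_\alpha$ is bounded in $H_0^2(\R^N)$.

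The main obstacle is the strict sub-threshold estimate $b_\alpha<b^*$, where $b^*$ denotes the Nehari (equivalently, mountain-pass) level of the purely critical problem $\Delta^2 u=\alpha(|x|^{-\gamma}*|u|^{p_\gamma^*})|u|^{p_\gamma^*-2}u$, which is, up to a multiplicative factor depending on $\alpha$, a power of the best HLS constant $S_{HL}$ associated with $(p_\gamma^*,\gamma)$. I would test $\mathcal{F}_\alpha$ along the fibers $t\mapsto tU_\varepsilon$ of a family of Talenti-type extremizers $U_\varepsilon$ for $S_{HL}$, concentrating as $\varepsilon\to 0$. The purely critical part of the Nehari maximum reproduces $b^*$, while the subcritical $p_\theta$-term contributes a strictly positive correction; the scaling condition $2\theta<\gamma$ (equivalently $p_\theta>p_\gamma^*$) is precisely what makes this correction dominate the error coming from the critical expansion, so that the Nehari value along $U_\varepsilon$ drops strictly below $b^*$ for small enough $\varepsilon$. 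Executing the expansion of the two nonlocal integrals carefully is the most delicate step of the proof.

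With the strict bound $b_\alpha<b^*$ in hand, a Lions-type concentration-compactness analysis of the bounded sequence $(u_n)$ recovers compactness. Suitable translations $y_n\in\R^N$ can be chosen so that $u_n(\cdot+y_n)\rightharpoonup u$ weakly in $H_0^2(\R^N)$ with $u\neq 0$; vanishing and dichotomy are both excluded, because either scenario would force the critical residual to supply an energy of at least $b^*$, contradicting $b_\alpha<b^*$. A nonlocal Brezis-Lieb decomposition of the two Choquard integrals, combined with strong convergence of the subcritical $p_\theta$-term on bounded sets via Rellich-Kondrachov, then gives $u_n\to u$ strongly in $H_0^2(\R^N)$. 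Hence $u\in\mathcal{N}_\alpha$ and $\mathcal{F}_\alpha(u)=b_\alpha$, and because $\mathcal{N}_\alpha$ is a natural constraint the Lagrange multiplier at $u$ vanishes, so $u$ is a weak solution of \eqref{fr} and thus a ground state.
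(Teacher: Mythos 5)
Your strategy is viable, but it is genuinely different from the paper's. The paper never compares $b_\alpha$ with the level of the purely critical problem: its compactness threshold is $b_{\mathcal R}$, the ground-state level of the non-critical limit equation \eqref{squ} (the $\theta$-term alone). Compactness is obtained through a Cerami--Molle/Cerami--Vaira type splitting lemma (Lemma \ref{compact}): a Palais--Smale sequence of ${\mathcal F}_\alpha\!\mid_{{\mathcal N}_\alpha}$ either converges strongly or differs from its weak limit by finitely many translated profiles solving \eqref{squ}, with additive energies, so that $(PS)_d$ sequences are relatively compact for $d<b_{\mathcal R}$ (Lemma \ref{corr1}). The strict inequality $b_\alpha<b_{\mathcal R}$ is then proved with no asymptotic expansion at all: one takes the known ground state $T$ of \eqref{squ} (existence quoted from \cite{AN2016}), projects it onto ${\mathcal N}_\alpha$, observes that the projection parameter satisfies $s<1$, and compares the energies algebraically (Lemma \ref{flg}); Ekeland's principle together with the natural-constraint lemmas (Lemmas \ref{nehari1}--\ref{nehari3}) finishes the proof. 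What your route buys is that your threshold $b^*$, attached to $\Delta^2u=\alpha\big(|x|^{-\gamma}*|u|^{p_\gamma^*}\big)|u|^{p_\gamma^*-2}u$, directly targets the dilation invariance of the critical term, i.e.\ the concentration mechanism, which the paper's splitting lemma does not track explicitly; the price is that you need the extremal functions and best constant of the critical biharmonic Hardy--Littlewood--Sobolev problem and a careful expansion of two nonlocal integrals along concentrating bubbles, whereas the paper's inequality costs two lines but leans on the auxiliary existence result for \eqref{squ}.

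Two points in your sketch need repair before it is a proof. First, dichotomy by pure translation is not excluded by $b_\alpha<b^*$: since both convolution terms are translation invariant, a piece drifting to infinity without concentrating is (asymptotically) a solution of the full autonomous equation \eqref{fr}, so its energy is bounded below by $b_\alpha$, not by $b^*$; the correct way to rule this scenario out is energy additivity combined with $b_\alpha>0$ and the fact that the recentred nontrivial weak limit is already a critical point with ${\mathcal F}_\alpha(u)\geq b_\alpha$. The threshold $b^*$ is only needed against concentrating (possibly travelling) bubbles. Second, the sub-threshold estimate $b_\alpha<b^*$ is asserted rather than carried out, and it is the heart of your argument; note, however, that in this potential-free whole-space setting it is cheaper than a Brezis--Nirenberg expansion: the $\theta$-term enters ${\mathcal F}_\alpha$ with a negative sign, so evaluating the Nehari maximum along the fiber of a (truncated, if necessary for integrability of its $\theta$-energy) critical extremizer already drops strictly below $b^*$; an $\varepsilon$-expansion is only needed to control the truncation error. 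With these corrections your approach closes, but it remains a different proof from the one in the paper.
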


\medskip

Now, in the Section $2$	, we will be collecting some preliminary results and then it will be followed by Section $3$ and $4$ which consists of the proofs of our main results.

\bigskip

\section{Preliminary results}

\begin{lemma}\label{cc}(\cite[Lemma 1.1]{L1984}, \cite[Lemma 2.3]{MV2013})
There exists a constant $C_0>0$ such that
$$
\int_{\R^N}|u|^t \leq C_0||u||\Big(\sup_{y\in \R^N} \int_{B_1(y)}|u|^t \Big)^{1-\frac{2}{t}},
$$
for any $u\in H_{0}^{2}(\R^N)$ and $t\in [1, \frac{2N}{N-4}]$.
\end{lemma}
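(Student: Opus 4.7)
The approach I would take is a Lions-style concentration-compactness argument adapted to the biharmonic Sobolev space $H_0^2(\R^N)$. The structure of the inequality (a global $L^t$ integral controlled by a product of a global Sobolev norm and the supremum of a local $L^t$ integral) points to a bounded-overlap covering of $\R^N$ by unit balls combined with a local Sobolev embedding on each ball. I would first treat the principal range $2 < t < \frac{2N}{N-4}$, which contains all the analytic content, and then recover the endpoints by approximation or a direct check.

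The plan has three ingredients. First, I would fix a countable covering $\{B_1(y_i)\}_{i \in \mathbb{N}}$ of $\R^N$ with bounded overlap, i.e.\ every point lies in at most $K(N)$ of the balls, so that $\int_{\R^N}|u|^t \leq \sum_i \int_{B_1(y_i)} |u|^t$. Second, I would invoke the Sobolev embedding $H^2(B_1(y_i)) \hookrightarrow L^t(B_1(y_i))$, which is valid for $1 \leq t \leq \frac{2N}{N-4}$ when $N \geq 5$, to obtain $\|u\|_{L^t(B_1(y_i))}^2 \leq C\|u\|_{H^2(B_1(y_i))}^2$, then sum over $i$ and exploit the bounded overlap together with the identity $\|D^2 u\|_{L^2(\R^N)} = \|\Delta u\|_{L^2(\R^N)}$ to reach $\sum_i \|u\|_{L^t(B_1(y_i))}^2 \leq C' \|u\|_{H_0^2(\R^N)}^2$. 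Third, I would apply the elementary splitting
$$\sum_i \int_{B_1(y_i)} |u|^t = \sum_i \Bigl(\int_{B_1(y_i)} |u|^t\Bigr)^{1-2/t}\Bigl(\int_{B_1(y_i)} |u|^t\Bigr)^{2/t} \leq \Bigl(\sup_{y\in\R^N}\int_{B_1(y)}|u|^t\Bigr)^{1-2/t}\sum_i\|u\|_{L^t(B_1(y_i))}^2,$$
so that combining the three steps delivers the inequality with $C_0$ equal to the product of dimensional constants accumulated along the way.

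The main obstacle I anticipate is the mismatch between the full $H^2$-norm produced by the local Sobolev embedding and the global norm $\|u\|_{H_0^2(\R^N)} = \|\Delta u\|_{L^2(\R^N)}$: the lower-order pieces $\|u\|_{L^2(B_1(y_i))}$ and $\|\nabla u\|_{L^2(B_1(y_i))}$ are not directly controlled by $\|\Delta u\|_{L^2(\R^N)}$ alone. To close the argument I would replace the plain Sobolev embedding on each ball by a Gagliardo--Nirenberg type inequality which trades these lower-order $L^2$ contributions for a small fraction of $\|u\|_{L^t(B_1(y_i))}$ (to be absorbed on the left) together with $\|\Delta u\|_{L^2(B_1(y_i))}$ (to be summed on the right). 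The endpoint $t = \frac{2N}{N-4}$ follows by approximation in $t$, and the subrange $1 \leq t < 2$, where the exponent $1 - 2/t$ is non-positive, reduces via a direct Hölder estimate on each ball together with the same summation trick to the principal case $t > 2$.
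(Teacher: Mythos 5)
The paper itself offers no proof of this lemma---it is simply quoted from Lions and from Moroz--Van Schaftingen---and your skeleton (bounded-overlap covering by unit balls, a local embedding on each ball, and the splitting of $\sum_i\int_{B_1(y_i)}|u|^t$ into a sup factor times $\sum_i\|u\|_{L^t(B_1(y_i))}^2$) is precisely the argument behind the cited results. The genuine gap is in the step you yourself single out as the main obstacle: the repair you propose cannot work. No inequality of the form $\|u\|_{L^2(B_1)}\le\varepsilon\|u\|_{L^t(B_1)}+C_\varepsilon\|\Delta u\|_{L^2(B_1)}$ can let you absorb the lower-order terms and end up with only $\|\Delta u\|_{L^2(B_1(y_i))}$ on the right: after absorption you would be claiming $\|u\|_{L^t(B_1)}^2\le C\|\Delta u\|_{L^2(B_1)}^2$, which fails already for $u\equiv 1$ (or any harmonic $u$) on the ball. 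Moreover the failure is not an artifact of your method: with a right-hand side built solely from $\|\Delta u\|_{L^2(\R^N)}$ the asserted inequality is false for every subcritical $t$. Take $u\in C_c^\infty(\R^N)$, $u\not\equiv 0$, and $u_\lambda(x)=u(x/\lambda)$ with $\lambda\to\infty$: the left-hand side grows like $\lambda^N$, $\|\Delta u_\lambda\|_{L^2}^2$ grows like $\lambda^{N-4}$, and $\sup_{y}\int_{B_1(y)}|u_\lambda|^t$ stays bounded, so the inequality is violated whether the norm enters with power $1$ or $2$.

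What the covering argument does prove---and what the cited lemmas actually assert---is the estimate with the full inhomogeneous norm, squared: $\int_{\R^N}|u|^t\le C\bigl(\sup_{y\in\R^N}\int_{B_1(y)}|u|^t\bigr)^{1-2/t}\bigl(\|u\|_{L^2(\R^N)}^2+\|\Delta u\|_{L^2(\R^N)}^2\bigr)$. With that right-hand side your first two steps already close the proof: sum the local embeddings $\|u\|^2_{L^t(B_1(y_i))}\le C\|u\|^2_{H^2(B_1(y_i))}$ over a bounded-overlap cover; no absorption is needed. Two further corrections to your plan. Your own computation outputs the square of the norm, not the first power displayed in the paper; comparing degrees of homogeneity in $u$ (degree $t$ on the left, $a+t-2$ on the right if the norm carries power $a$) forces $a=2$, so the printed exponent is a misprint you should not try to reproduce. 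And the range $t\in[1,2)$ should be discarded rather than ``reduced'' to $t>2$: there $1-2/t<0$, the statement degenerates, and $H^2(\R^N)$ does not even embed into $L^1(\R^N)$; the meaningful range is $2<t\le \frac{2N}{N-4}$, where the endpoint needs no approximation in $t$ because the local embedding holds there directly.
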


\begin{lemma}\label{bogachev}(\cite[Proposition 4.7.12]{B2007})
Let $1< t< \infty$ and assume that $(z_n)$ is a bounded sequence in $L^t(\R^N)$ which converges to $z$ almost everywhere. Then, $z_n$ converges weakly to $z$ in  $L^t(\R^N)$, that is, $z_n\rightharpoonup z$.
\end{lemma}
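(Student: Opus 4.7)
The plan is to combine the reflexivity of $L^t(\R^N)$ with a subsequence extraction, identify the weak limit as $z$ via a localized Vitali-type argument on finite-measure sets, and finally upgrade convergence along subsequences to convergence of the whole sequence through the Urysohn subsequence principle.

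First I would verify that the pointwise limit actually lies in $L^t(\R^N)$: applying Fatou's lemma to $|z_n|^t \to |z|^t$ almost everywhere yields $\int_{\R^N} |z|^t \le \liminf_n \int_{\R^N} |z_n|^t \le \sup_n \|z_n\|_t^t < \infty$, so $z \in L^t(\R^N)$ and in particular the statement $z_n \rightharpoonup z$ makes sense. Since $1 < t < \infty$, the space $L^t(\R^N)$ is reflexive, hence by the Banach--Alaoglu theorem every subsequence of $(z_n)$ admits a further subsequence $(z_{n_k})$ that converges weakly in $L^t(\R^N)$ to some $w \in L^t(\R^N)$.

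Next I would identify $w$ with $z$ almost everywhere. For any measurable set $A \subset \R^N$ with $|A| < \infty$, the indicator $\mathbf{1}_A$ belongs to $L^{t'}(\R^N)$ (with $t'$ the conjugate exponent, finite because $t > 1$), so weak convergence gives $\int_A z_{n_k} \to \int_A w$. On the other hand, Hölder's inequality supplies uniform integrability on $A$: for any $E \subset A$ one has $\int_E |z_{n_k}| \le |E|^{1/t'} \|z_{n_k}\|_t \le C\,|E|^{1/t'}$, which tends to $0$ as $|E|\to 0$ uniformly in $k$. Combined with the a.e.\ convergence $z_{n_k} \to z$ on $A$, Vitali's convergence theorem yields $\int_A z_{n_k} \to \int_A z$. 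Therefore $\int_A w = \int_A z$ for every set $A$ of finite measure, which forces $w = z$ almost everywhere.

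Finally, I would conclude by a standard Urysohn-type argument. Suppose, for contradiction, that $z_n$ does not converge weakly to $z$; then some $\phi \in L^{t'}(\R^N)$ and some $\varepsilon > 0$ satisfy $|\langle z_{n_k} - z, \phi\rangle| \ge \varepsilon$ along a subsequence. Applying the two previous steps to this subsequence produces a further sub-subsequence that converges weakly to $z$, contradicting the lower bound $\varepsilon$. The main technical point is the identification step, where one cannot test directly against $L^{t'}$ functions without first knowing weak convergence; the key idea is to exploit localization to sets of finite measure, where the $L^t$ bound alone forces uniform integrability and lets a.e.\ convergence determine the integral. Once that identification is in hand, reflexivity and the subsequence principle make the rest automatic.
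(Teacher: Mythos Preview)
Your proof is correct. The paper itself does not supply a proof of this lemma: it is stated as a citation to \cite[Proposition~4.7.12]{B2007} and used as a black box in the subsequent Brezis--Lieb type lemmas. So there is no ``paper's own proof'' to compare against.

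Your argument is a clean, self-contained route: Fatou to place $z$ in $L^t$, reflexivity plus Banach--Alaoglu for weak subsequential limits, a localized Vitali argument to identify the limit, and the Urysohn subsequence principle to finish. One minor remark: in the identification step you could also argue slightly more directly by noting that $z_{n_k} - z \to 0$ a.e.\ and is bounded in $L^t$, so by Egorov on any finite-measure $A$ the sequence converges to $0$ in measure on $A$, and then uniform integrability (from the $L^t$ bound) gives $\int_A (z_{n_k}-z)\,\phi \to 0$ for bounded $\phi$; density of bounded compactly supported functions in $L^{t'}$ then finishes. But your Vitali formulation is equivalent and perfectly rigorous.
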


\begin{lemma}\label{blc}{\sf (\it Local Brezis-Lieb lemma)}
Let $1< t< \infty$ and assume that $(z_n)$ is a bounded sequence in $L^t(\R^N)$ which converges to $z$ almost everywhere. Then, for every $1\leq q\leq t$, we have
$$
\lim_{n\to\infty}\int_{\R^N}\big| |z_n|^q-|z_n-z|^q-|z|^q\big|^{\frac{t}{q}}=0\,,
$$
and
$$
\lim_{n\to\infty}\int_{\R^N}\big| |z_n|^{q-1}z_n-|z_n-z|^{q-1}(z_n-z)-|z|^{q-1}z\big|^{\frac{t}{q}}=0.
$$
\end{lemma}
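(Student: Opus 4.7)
The plan is to adapt the classical Brezis--Lieb argument, replacing the integrand by a pointwise ``excess'' that can be dominated by an integrable function. The key ingredient is the elementary inequality: for every $\epsilon>0$ there exists a constant $C_\epsilon>0$ such that
$$
\bigl|\,|a+b|^q-|a|^q-|b|^q\,\bigr|^{t/q}\;\le\;\epsilon\,|a|^t+C_\epsilon\,|b|^t,\qquad a,b\in\R,
$$
and the analogous estimate with $|s|^q$ replaced by $|s|^{q-1}s$. I would first prove this via the mean-value bound $\bigl||a+b|^q-|a|^q\bigr|\le q(|a|+|b|)^{q-1}|b|$, Young's inequality to absorb the mixed term $|a|^{q-1}|b|$ into $\delta|a|^q+C_\delta|b|^q$, and then raise both sides to the power $t/q\ge 1$.

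With that in hand, I would apply the inequality pointwise with $a:=z_n-z$ and $b:=z$ (so that $a+b=z_n$) and set
$$
f_n(x):=\bigl|\,|z_n(x)|^q-|z_n(x)-z(x)|^q-|z(x)|^q\,\bigr|^{t/q},
$$
which gives $f_n\le\epsilon|z_n-z|^t+C_\epsilon|z|^t$ almost everywhere. Since $(z_n)$ is bounded in $L^t(\R^N)$ and $z_n\to z$ a.e., Fatou's lemma yields $z\in L^t(\R^N)$, and there is $M>0$ with $\|z_n-z\|_t^t\le M$ for all $n$. I then introduce the truncated excess
$$
W_n:=\bigl(f_n-\epsilon\,|z_n-z|^t\bigr)^+.
$$
By construction $0\le W_n\le C_\epsilon|z|^t\in L^1(\R^N)$, and, because $z_n\to z$ a.e.\ forces $f_n\to 0$ and $|z_n-z|^t\to 0$ pointwise, also $W_n\to 0$ almost everywhere. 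Lebesgue's dominated convergence theorem then gives $\int_{\R^N}W_n\to 0$.

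Combining $f_n\le W_n+\epsilon|z_n-z|^t$ with $\|z_n-z\|_t^t\le M$ produces
$$
\limsup_{n\to\infty}\int_{\R^N}f_n\;\le\;\limsup_{n\to\infty}\int_{\R^N}W_n+\epsilon M\;=\;\epsilon M,
$$
and sending $\epsilon\to 0^+$ yields the first identity. The second identity follows from exactly the same scheme, applied to the nonlinearity $\psi(s):=|s|^{q-1}s$ in place of $\phi(s):=|s|^q$: since $\psi$ is $C^1$ on $\R$ for $q\ge 1$ with $|\psi'(s)|\le q|s|^{q-1}$, the derivation of its pointwise inequality is parallel. The only genuinely delicate step is establishing the elementary pointwise inequality; everything afterwards is the standard excess-plus-dominated-convergence manoeuvre.
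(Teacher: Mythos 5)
Your proposal is correct and follows essentially the same route as the paper's proof: the same elementary $\varepsilon$-inequality, the same truncated excess (your $W_n$ is the paper's $f_{n,\varepsilon}$), dominated convergence, and the $\limsup\le \varepsilon M$ conclusion before letting $\varepsilon\to 0$. The only additions are your sketch of how to prove the elementary inequality and the explicit remark on treating $|s|^{q-1}s$, both of which the paper leaves implicit.
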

\begin{proof}
Let us fix $\varepsilon>0$, then there exists a constant $C(\varepsilon)>0$ such that for all $c$,$d\in \R$, one could have
\begin{equation}\label{lb1}
\Big||c+d|^{q}-|c|^{q}\big|^{\frac{t}{q}}\leq \varepsilon|c|^{t}+C(\varepsilon)|d|^{t},
\end{equation}
Next, using \eqref{lb1}, we get
$$
\begin{aligned}
|f_{n, \varepsilon}|=& \Big( \Big||z_{n}|^{q}-|z_{n}-z|^{q}-|z^{q}|\Big|^{\frac{t}{q}}-\varepsilon|z_{n}-z|^{t}\Big)^{+}\\
&\leq (1+C(\varepsilon))|z|^{t}.
\end{aligned}
$$
Further, using the Lebesgue Dominated Convergence theorem, we obtain
\begin{equation}
\intr {f_{n, \varepsilon}} \rightarrow 0  \quad\mbox{ as } n\rightarrow \infty.
\end{equation}
Thus, we find that
$$
\Big||z_{n}|^{q}-|z_{n}-z|^{q}-|z|^{q}\Big|^{\frac{t}{q}}\leq f_{n, \varepsilon}+\varepsilon|z_{n}-z|^{t},
$$
which yields 
$$
\limsup_{n\rightarrow \infty} {\intr \Big||z_{n}|^{q}-|z_{n}-z|^{q}-|z|^{q}\Big|^{\frac{t}{q}}}\leq c\varepsilon,
$$
for some $c= \sup_{n}|z_{n}-z|_{t}^{t}< \infty$. We finish the proof by letting $\varepsilon \rightarrow 0$.
\end{proof}

\begin{lemma}\label{nlocbl}{\sf (\it Nonlocal Brezis-Lieb lemma}(\cite[Lemma 2.4]{MV2013})
Let us assume that $N\geq 5$, $\theta\in (0,N)$ and $p_{\theta}\in [1,\frac{2N}{2N-\theta})$. Suppose that $(u_n)$ is a bounded sequence in $L^{\frac{2Np_{\theta}}{2N-\theta}}(\R^N) \cap L^{\frac{2N}{N-4}}(\R^N) $ such that $u_n \rightarrow u$ almost everywhere in $\R^N$. Then, we have
\begin{equation*}
\int_{\R^N}\Big(|x|^{-\theta}*{|u_n|^{c}}\Big){|u_n|^c}dx-\int_{\R^N}\Big(|x|^{-\theta}*{|u_n-u|^c}\Big){|u_n-u|^c}dx \rightarrow \int_{\R^N}\Big(|x|^{-\theta}*{|u|^c}\Big){|u|^c}dx,
\end{equation*}
where either $c= p_{\theta}$ or $c=p_{\gamma}^{*}=\frac{2N-\gamma}{N-4}$\; {\rm (In this case $\theta=\gamma$)}.
\end{lemma}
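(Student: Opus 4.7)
The plan is to expand the difference using the symmetry of the Riesz kernel and then estimate the resulting pieces with the Hardy--Littlewood--Sobolev inequality \eqref{hli} together with Lemmas \ref{bogachev} and \ref{blc}. I set $A_n:=|u_n|^{c}$, $B_n:=|u_n-u|^{c}$, $U:=|u|^{c}$, and introduce the auxiliary exponent $t:=\frac{2Nc}{2N-\theta}$. When $c=p_\theta$ this gives $t=\frac{2Np_\theta}{2N-\theta}$, and when $c=p_\gamma^{*}$ with $\theta=\gamma$ a direct computation gives $t=\frac{2N}{N-4}$; in either case the hypothesis makes $(u_n)$ bounded in $L^{t}(\R^N)$, which is exactly what is needed so that $A_n,B_n,U$ are bounded in $L^{\frac{2N}{2N-\theta}}(\R^N)$.

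Since $|x|^{-\theta}$ is even, the bilinear form $(f,g)\mapsto\int(|x|^{-\theta}*f)g$ is symmetric, which lets me rewrite
\begin{equation*}
\int_{\R^N}(|x|^{-\theta}*A_n)A_n - \int_{\R^N}(|x|^{-\theta}*B_n)B_n = \int_{\R^N}\bigl(|x|^{-\theta}*(A_n-B_n)\bigr)(A_n+B_n).
\end{equation*}
Adding and subtracting $U$ in each factor, the discrepancy from the target splits as
\begin{equation*}
\begin{aligned}
&\int_{\R^N}(|x|^{-\theta}*A_n)A_n - \int_{\R^N}(|x|^{-\theta}*B_n)B_n - \int_{\R^N}(|x|^{-\theta}*U)U \\
&\qquad=\int_{\R^N}\bigl(|x|^{-\theta}*(A_n-B_n-U)\bigr)(A_n+B_n) + \int_{\R^N}(|x|^{-\theta}*U)(A_n+B_n-U).
\end{aligned}
\end{equation*}

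The first term I would kill via Lemma \ref{blc}: applied to $(z_n,z)=(u_n,u)$ with $q=c$ and $t=\frac{2Nc}{2N-\theta}$ (for which $1\leq c\leq t$), it yields $A_n-B_n-U\to 0$ strongly in $L^{\frac{2N}{2N-\theta}}(\R^N)$, so combining with the uniform bound on $A_n+B_n$ in that same space and applying the self-dual case $r=s=\frac{2N}{2N-\theta}$ of \eqref{hli} drives this integral to $0$. For the second term, the a.e.\ convergence $u_n\to u$ together with continuity of $s\mapsto|s|^{c}$ gives $A_n\to U$ and $B_n\to 0$ a.e., hence $A_n+B_n-U\to 0$ a.e.\ while staying bounded in $L^{\frac{2N}{2N-\theta}}(\R^N)$; Lemma \ref{bogachev} then upgrades this to the weak convergence $A_n+B_n-U\rightharpoonup 0$, and since $|x|^{-\theta}*U$ is a fixed element of the dual space $L^{2N/\theta}(\R^N)$ (again by \eqref{hli}), testing it against a weakly null sequence produces the desired convergence to $0$. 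The only real subtlety is the initial bookkeeping that identifies $t=\frac{2Nc}{2N-\theta}$ as the exponent simultaneously matched by the Lebesgue boundedness built into the hypothesis and by the output of Lemma \ref{blc}; after that, the argument is a mechanical deployment of the three tools above.
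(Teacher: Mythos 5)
Your proof is correct and follows essentially the same route as the paper's: both exploit the symmetry of the Riesz bilinear form, use Lemma \ref{blc} (with $q=c$, $t=\frac{2Nc}{2N-\theta}$) to get strong $L^{\frac{2N}{2N-\theta}}(\R^N)$ convergence of $|u_n|^{c}-|u_n-u|^{c}$ to $|u|^{c}$, upgrade a.e.\ convergence to weak convergence via Lemma \ref{bogachev}, and close with the Hardy--Littlewood--Sobolev inequality \eqref{hli}. The only deviation is a cosmetic rearrangement of the quadratic identity --- you pair the strongly null $A_n-B_n-U$ against the bounded factor $A_n+B_n$ and the fixed function $|x|^{-\theta}*U$ against the weakly null $A_n+B_n-U$, whereas the paper expands in terms of $A_n-B_n$ and $B_n$ --- and your exponent bookkeeping (in particular identifying $t=\frac{2N}{N-4}$ in the case $c=p_{\gamma}^{*}$, $\theta=\gamma$) is consistent with, and somewhat more explicit than, the paper's.
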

\begin{proof}
Let us take $q=c=p_{\theta}$, $t=\frac{2N}{2N-\theta}$ in Lemma \ref{blc}, then we get
\begin{equation}\label{bh1}
|u_n-u|^{p_{\theta}}-|u_n|^{p_{\theta}}\to |u|^{p_{\theta}} \mbox{ strongly in } L^{\frac{2N}{2N-\theta}}(\R^N),
\end{equation}
as $n\rightarrow \infty$. By Lemma \ref{bogachev}, we have 
\begin{equation}\label{bh2}
|u_n-u|^{p_{\theta}}\rightharpoonup 0 \mbox{ weakly in } L^{\frac{2N}{2N-\theta}}(\R^{N}).
\end{equation}
Next, we use the Hardy-Littlewood-Sobolev inequality \eqref{hli} to obtain
\begin{equation}\label{b3}
|x|^{-\theta}*\Big(|u_n-u|^p_{\theta}-|u_n|^{p_{\theta}}\Big)\to |x|^{-\theta}*|u|^{p_{\theta}} \quad\mbox{ in } L^{\frac{2N}{\theta}}(\R^N).
\end{equation}
On the other hand
\begin{equation}\label{nb1}
\begin{aligned}
&\int_{\R^N}\Big(|x|^{-\theta}*|u_n|^{p_{\theta}}\Big)|u_n|^{p_{\theta}}dx-\int_{\R^N}\Big(|x|^{-\theta}*|u_n-u|^{p_{\theta}}\Big)|u_n-u|^{p_{\theta}} dx\\&=\intr \Big[|x|^{-\theta}*\Big(|u_n|^{p_{\theta}}-|u_n-u|^{p_{\theta}}\Big)\Big]\Big(|u_n|^{p_{\theta}}-|u_n-u|^{p_{\theta}}\Big)dx\\
&+2\intr \Big[|x|^{-\theta}*\Big(|u_n|^{p_{\theta}}-|u_n-u|^{p_{\theta}}\Big)\Big]|u_n-u|^{p_{\theta}} dx.
\end{aligned}
\end{equation}

Finally, passing to the limit in \eqref{nb1} and using \eqref{bh1}-\eqref{bh2}, the result holds. Similarly, we prove the case when $c=p_{\gamma}^{*}$.
	
\end{proof}

\begin{lemma}\label{anbl}
Let us assume that $N\geq 5$, $\theta\in (0,N)$ and $p_{\theta}\in [1,\frac{2N}{2N-\theta})$. Suppose that $(u_n)$ is a bounded sequence in $L^{\frac{2Np_{\theta}}{2N-\theta}}(\R^N) \cap L^{\frac{2N}{N-4}}(\R^N) $ such that $u_n \rightarrow u$ almost everywhere in $\R^N$. Then, for any $h\in L^{\frac{2Np_{\theta}}{2N-\theta}}(\R^N)\cap L^{\frac{2N}{N-4}}(\R^N)$ we have
$$
\int_{\R^N}\Big(|x|^{-\theta}*|u_n|^c\Big)|u_n|^{c-2}u_nh\; dx \rightarrow \int_{\R^N} \Big(|x|^{-\theta}*|u|^c\Big)|u|^{c-2}uh\; dx,
$$
where either $c= p_{\theta}$ or $c=p_{\gamma}^{*}=\frac{2N-\gamma}{N-4}$ \;{\rm (In this case $\theta=\gamma$)}.
\end{lemma}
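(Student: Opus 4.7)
The plan is to adapt the splitting from Lemma~\ref{nlocbl} to account for the extra test function $h$, reducing the statement to two weak-strong limit passages. Write
\begin{align*}
&\int_{\R^N}(|x|^{-\theta}*|u_{n}|^{c})|u_{n}|^{c-2}u_{n}\,h\,dx - \int_{\R^N}(|x|^{-\theta}*|u|^{c})|u|^{c-2}u\,h\,dx \\
&\quad = \int_{\R^N}\bigl[|x|^{-\theta}*(|u_{n}|^{c}-|u|^{c})\bigr]|u_{n}|^{c-2}u_{n}\,h\,dx + \int_{\R^N}(|x|^{-\theta}*|u|^{c})\bigl(|u_{n}|^{c-2}u_{n}-|u|^{c-2}u\bigr)h\,dx,
\end{align*}
and treat the two pieces separately. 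I focus on $c=p_{\theta}$; the case $c=p_{\gamma}^{*}$ with $\theta=\gamma$ is entirely analogous, using the $L^{2N/(N-4)}$-bound in place of the $L^{2Np_{\theta}/(2N-\theta)}$-bound.

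Set $r_{1}:=\tfrac{2Np_{\theta}}{(2N-\theta)(c-1)}$. For the second piece, boundedness of $(u_{n})$ in $L^{2Np_{\theta}/(2N-\theta)}$ gives boundedness of $|u_{n}|^{c-2}u_{n}$ in $L^{r_{1}}$, and a.e.\ convergence combined with Lemma~\ref{bogachev} yields the weak convergence $|u_{n}|^{c-2}u_{n}\rightharpoonup |u|^{c-2}u$ in $L^{r_{1}}$. By HLS \eqref{hli}, $|x|^{-\theta}*|u|^{c}\in L^{2N/\theta}$, and since $h\in L^{2Np_{\theta}/(2N-\theta)}$, H\"older puts $(|x|^{-\theta}*|u|^{c})h$ in the conjugate space $L^{r_{1}'}$; the second piece therefore tends to $0$ by weak-strong pairing.

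For the first piece, Lemma~\ref{bogachev} applied to $(|u_{n}|^{c})$ gives $|u_{n}|^{c}\rightharpoonup |u|^{c}$ weakly in $L^{2N/(2N-\theta)}$, and HLS makes the convolution with $|x|^{-\theta}$ a bounded, hence weakly continuous, linear map from $L^{2N/(2N-\theta)}$ into $L^{2N/\theta}$, so $|x|^{-\theta}*(|u_{n}|^{c}-|u|^{c})\rightharpoonup 0$ in $L^{2N/\theta}$. To pair this with $|u_{n}|^{c-2}u_{n}h$, I would upgrade the latter to \emph{strong} convergence in $L^{2N/(2N-\theta)}$ via Vitali's theorem. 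A.e.\ convergence is immediate (as $c>1$), while the H\"older estimate
$$
\int_{E}\bigl(|u_{n}|^{c-1}|h|\bigr)^{\frac{2N}{2N-\theta}}dx \le \bigl\||u_{n}|^{c-1}\bigr\|_{r_{1}}^{\frac{2N}{2N-\theta}}\,\|h\mathbf{1}_{E}\|_{\frac{2Np_{\theta}}{2N-\theta}}^{\frac{2N}{2N-\theta}},
$$
valid for every measurable $E\subset\R^{N}$, supplies both uniform integrability (take $|E|\to 0$) and tightness (take $E=\{|x|>R\}$, $R\to\infty$), since $\||u_{n}|^{c-1}\|_{r_{1}}$ is uniformly bounded while the second factor vanishes by the absolute continuity of the integral and the integrability of $|h|^{2Np_{\theta}/(2N-\theta)}$.

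The main obstacle is precisely this Vitali step: the sequence $(|u_{n}|^{c-2}u_{n})$ alone is only weakly convergent in $L^{r_{1}}$, so the fixed function $h$ must act as a weight that supplies the missing equi-integrability and tightness. Once strong convergence of $|u_{n}|^{c-2}u_{n}h$ is in hand, the weak convergence of $|x|^{-\theta}*(|u_{n}|^{c}-|u|^{c})$ in $L^{2N/\theta}$ against the strong limit in the dual space $L^{2N/(2N-\theta)}$ forces the first piece to $0$ as well, completing the argument.
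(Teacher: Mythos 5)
Your argument is correct, but it is a genuinely different route from the paper's. The paper proves the lemma by the Brezis--Lieb splitting $v_n=u_n-u$: it applies the local Brezis--Lieb lemma (Lemma \ref{blc}) twice, to $(u_n,u)$ and to $(u_nh^{1/c},uh^{1/c})$, pushes the resulting strong convergences through the Hardy--Littlewood--Sobolev inequality, expands the integral into the four terms of \eqref{bl1}, and kills the pure-$v_n$ term by the separate estimate \eqref{est03}--\eqref{est04}. You instead add and subtract $\int(|x|^{-\theta}*|u|^{c})|u_n|^{c-2}u_nh$, reducing everything to two weak--strong pairings: for the first piece you transport the weak convergence $|u_n|^{c}\rightharpoonup|u|^{c}$ in $L^{2N/(2N-\theta)}$ through the convolution (a bounded, hence weakly continuous, operator into $L^{2N/\theta}$) and pair it with the \emph{strong} $L^{2N/(2N-\theta)}$-convergence of $|u_n|^{c-2}u_nh$, which you obtain from Vitali's theorem using $h$ as an integrable weight (your H\"older estimate with exponents $p_\theta/(p_\theta-1)$ and $p_\theta$ is exact, and it does supply both equi-integrability and tightness); for the second piece you pair the weak convergence $|u_n|^{c-2}u_n\rightharpoonup|u|^{c-2}u$ in $L^{r_1}$ against the fixed function $(|x|^{-\theta}*|u|^{c})h$, which the exponent arithmetic $\tfrac{1}{r_1}+\tfrac{\theta}{2N}+\tfrac{2N-\theta}{2Np_\theta}=1$ indeed places in $L^{r_1'}$. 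What your route buys is that it bypasses Lemma \ref{blc} and the remainder sequence $v_n$ altogether, using only Lemma \ref{bogachev}, HLS and standard measure theory; what the paper's route buys is that it reuses exactly the machinery already set up for Lemma \ref{nlocbl} and keeps the $u_n-u$ splitting in the form that reappears in the compactness argument of Lemma \ref{compact}. One caveat common to both proofs: the exponent $r_1$ (respectively the paper's $\|v_n\|^{p_\theta}$ and $|v_n|^{p_\theta-1}h$ manipulations) requires $c>1$, so the borderline case $p_\theta=1$ admitted in the statement is not really covered by either argument; this is not a defect of your proposal relative to the paper.
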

\begin{proof} 
We will prove the lemma for $c=p_{\theta}$ and the method will be similar to prove it for the the second case, that is, $c= p_{\gamma}^{*}$. Let us assume that $h=h^+-h^-$ and $v_n=u_n-u$. We only require to prove the lemma for $h\geq 0$. We use Lemma \ref{blc} with $q=c=p_{\theta}$ and $t=\frac{2N}{2N-\theta}$ together with $(z_n,z)=(u_n,u)$ and $(z_n,z)=(u_nh^{1/c}, u h^{1/c})$ respectively in order to obtain
$$
\left\{
\begin{aligned}
&|u_n|^{p_{\theta}}-|v_n|^{p_{\theta}}\to |u|^{p_{\theta}} \\
&|u_n|^{{p_{\theta}}-2}u_n h- |v_n|^{{p_{\theta}}-2}v_n h\to |u|^{{p_{\theta}}-2}uh
\end{aligned}
\right.
\quad\mbox{ strongly in }\; L^{\frac{2N}{2N-\theta}}(\R^N).
$$
Now, we use the Hardy-Littlewood-Sobolev inequality to get
\begin{equation}\label{est00}
\left\{
\begin{aligned}
&|x|^{-\theta}*\Big(|u_n|^{p_{\theta}}-|v_n|^{p_{\theta}}\Big)\to |x|^{-\theta}*|u|^{p_{\theta}} \\
&|x|^{-\theta}*\Big(|u_n|^{{p_{\theta}}-2}u_nh-|v_n|^{{p_{\theta}}-2}v_nh\Big)\to |x|^{-\theta}*\Big(|u|^{{p_{\theta}}-2}uh\Big)
\end{aligned}
\right.
\quad\mbox{ strongly in }\; L^{\frac{2N}{\theta}}(\R^N).
\end{equation}
Also, Lemma \ref{bogachev} yields
\begin{equation}\label{est01}
\left\{
\begin{aligned}
&|u_n|^{{p_{\theta}}-2}u_n h\rightharpoonup |u|^{{p_{\theta}}-2}uh\\ 
& |v_n|^{p_{\theta}}\rightharpoonup 0\\
&|v_n|^{{p_{\theta}}-2}v_nh\rightharpoonup 0
\end{aligned}
\right.
\quad \mbox{ weakly in }\; L^{\frac{2N}{2N-\theta}}(\R^N)
\end{equation}
Then, the combination of \eqref{est00} and \eqref{est01} will give us
\begin{equation}\label{est02}
\begin{aligned}
&\intr \Big[|x|^{-\theta}*\Big(|u_n|^{p_{\theta}}-|v_n|^{p_{\theta}}\Big)\Big]\Big(|u_n|^{{p_{\theta}}-2}u_nh-|v_n|^{{p_{\theta}}-2}v_n h\Big)\to \int_{\R^N} \Big(|x|^{-\theta}*|u|^{p_{\theta}}\Big)|u|^{{p_{\theta}}-2}uh,\\
& \intr \Big[|x|^{-\theta}*\Big(|u_n|^{p_{\theta}}-|v_n|^{p_{\theta}}\Big)\Big]|v_n|^{{p_{\theta}}-2}v_nh\to 0,\\
&\intr \Big[|x|^{-\theta}*\Big(|u_n|^{{p_{\theta}}-2}u_nh-|v_n|^{{p_{\theta}}-2}v_nh\Big)\Big]|v_n|^{p_{\theta}}\to 0.
\end{aligned}
\end{equation}
Next, we use the Hardy-Littlewood-Sobolev inequality together with H\"older's inequality in order to find
\begin{equation}\label{est03}
\begin{aligned}
\left| \intr \Big(|x|^{-\theta}*|v_n|^{{p_{\theta}}}\Big)|v_n|^{{p_{\theta}}-2}v_nh \right|& \leq \|v_n\|^{p_{\theta}}_{\frac{2N{p_{\theta}}}{2N-\theta}}\||v_n|^{{p_{\theta}}-1}h\|_{\frac{2N}{2N-\theta}}\\
&\leq {p_{\theta}} \||v_n|^{{p_{\theta}}-1}h\|_{\frac{2N}{2N-\theta}}.
\end{aligned}
\end{equation}
Furthermore, by Lemma \ref{bogachev} we have $v_n^{\frac{2N(p_{\theta}-1)}{2N-\theta}}\rightharpoonup 0$ weakly in $L^{\frac{p_{\theta}}{p_{\theta}-1}}(\R^N)$. Hence, 
$$
\||v_n|^{p_{\theta}-1}h\|_{\frac{2N}{2N-\theta}}=\left(\intr |v_n|^{\frac{2N(p_{\theta}-1)}{2N-\theta}}|h|^{\frac{2N}{2N-\theta}}  \right)^{\frac{2N-\theta}{2N}}\to 0.
$$
Therefore, using \eqref{est03} we obtain
\begin{equation}\label{est04}
\lim_{n\to \infty} \intr \Big(|x|^{-\theta}*|v_n|^{p_\theta}\Big)|v_n|^{p_{\theta}-2}v_nh=0.
\end{equation}
On the other hand, we notice that
\begin{equation}\label{bl1}
\begin{aligned}
\intr \Big(|x|^{-\theta}*|u_n|^{p_{\theta}}\Big)|u_n|^{{p_{\theta}}-2}u_nh=& \intr \Big[|x|^{-\theta}*\Big(|u_n|^{p_{\theta}}-|v_n|^{p_{\theta}}\Big)\Big]\Big(|u_n|^{{p_{\theta}}-2}u_nh-|v_n|^{{p_{\theta}}-2}v_nh\Big)\\
&+\intr \Big[|x|^{-\theta}*\Big(|u_n|^{p_{\theta}}-|v_n|^{p_{\theta}}\Big)\Big]|v_n|^{{p_{\theta}}-2}v_nh\\
&+\intr \Big[|x|^{-\theta}*\Big(|u_n|^{{p_{\theta}}-2}u_nh-|v_n|^{{p_{\theta}}-2}v_n h\Big)\Big]|v_n|^{p_{\theta}}\\
&+\intr \Big(|x|^{-\theta}*|v_n|^{p_{\theta}}\Big)|v_n|^{{p_{\theta}}-2}v_nh.
\end{aligned}
\end{equation}
Finally, by passing to the limit in \eqref{bl1} together with \eqref{est02} and \eqref{est04} we get the desired result.
\end{proof}

\bigskip

\section{Proof of Theorem \ref{nonexis}}

In order to prove this theorem, we establish the following Pohozaev type of identity.

\begin{proposition}
Assume that $u\in H_{0}^{2}(\R^N)$ is a solution of \eqref{fr}. Then, we have
\begin{equation}\label{ne5}
\Big(\frac{N-4}{2}\Big) \|u\|_{H_{0}^2(\R^N)}^{2}= \frac{2N-\theta}{2p_{\theta}}\intr \Big(|x|^{-\theta}*|u|^{p_{\theta}}\Big)|u|^{p_{\theta}}+\alpha \Big(\frac{2N-\gamma}{2p}\Big)\intr \Big(|x|^{-\gamma}*|u|^{p}\Big)|u|^{p}.
\end{equation}
\end{proposition}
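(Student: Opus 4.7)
The plan is to apply the standard Pohozaev procedure: formally test the equation \eqref{fr} against the dilation generator $x\cdot\nabla u$ and identify the scaling weight of each term. For the biharmonic part I would integrate by parts twice, using the product rule $\Delta(x\cdot\nabla u)=2\Delta u+x\cdot\nabla(\Delta u)$ together with $\int_{\R^N}\Delta u\,(x\cdot\nabla\Delta u)\,dx=\tfrac12\int_{\R^N}x\cdot\nabla(|\Delta u|^2)\,dx=-\tfrac{N}{2}\|u\|_{H_0^2(\R^N)}^2$. This yields
$$
\int_{\R^N}\Delta^2 u\,(x\cdot\nabla u)\,dx \;=\; \frac{4-N}{2}\,\|u\|_{H_0^2(\R^N)}^2.
$$

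For the two nonlocal terms, rather than computing them pointwise I would exploit the dilation scaling of the Riesz bilinear form. Setting $u_\lambda(x):=u(\lambda x)$, the change of variables $x\mapsto x/\lambda$, $y\mapsto y/\lambda$ in the double integral gives, for any $\xi\in(0,N)$ and any exponent $q$,
$$
\int_{\R^N}\bigl(|x|^{-\xi}*|u_\lambda|^{q}\bigr)|u_\lambda|^{q}\,dx \;=\; \lambda^{\xi-2N}\int_{\R^N}\bigl(|x|^{-\xi}*|u|^{q}\bigr)|u|^{q}\,dx.
$$
Differentiating at $\lambda=1$ and using $\tfrac{d}{d\lambda}u_\lambda\big|_{\lambda=1}=x\cdot\nabla u$, one obtains the pointwise identity
$$
\int_{\R^N}\bigl(|x|^{-\xi}*|u|^{q}\bigr)|u|^{q-2}u\,(x\cdot\nabla u)\,dx \;=\; \frac{\xi-2N}{2q}\int_{\R^N}\bigl(|x|^{-\xi}*|u|^{q}\bigr)|u|^{q}\,dx.
$$
Applying this with $(\xi,q)=(\theta,p_\theta)$ and $(\xi,q)=(\gamma,p)$ and inserting the result into the equation tested against $x\cdot\nabla u$, then multiplying by $-1$, rearranges into \eqref{ne5}.

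The main obstacle will be rigorously justifying $x\cdot\nabla u$ as a test function, since a priori $u\in H_0^2(\R^N)$ only and $x\cdot\nabla u$ need neither lie in $H_0^2(\R^N)$ nor decay at infinity. To deal with this I would first upgrade the regularity of $u$ by a bootstrap on \eqref{fr}: the Hardy-Littlewood-Sobolev inequality \eqref{hli} together with \eqref{p}--\eqref{p1} places the right-hand side of \eqref{fr} in suitable local Lebesgue spaces, so standard biharmonic interior regularity yields $u\in W^{4,r}_{\mathrm{loc}}(\R^N)$ for some $r>1$, which makes $x\cdot\nabla u$ pointwise meaningful and legitimises the two integrations by parts on bounded balls. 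I would then carry out all of the above computations against a smooth radial cutoff $\varphi_R$ supported in $B_{2R}$ and equal to $1$ on $B_R$, and pass to the limit $R\to\infty$. The cutoff remainders involve $\nabla\varphi_R$ and $\Delta\varphi_R$, of sizes $O(R^{-1})$ and $O(R^{-2})$ on the annulus $B_{2R}\setminus B_R$; the global integrability of $|\Delta u|^2$ and of the two Riesz bilinear forms (guaranteed by the hypotheses and \eqref{hli}) then kills these remainders in the limit, producing \eqref{ne5}.
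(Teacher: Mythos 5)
Your proposal is correct and follows the same overall strategy as the paper: test \eqref{fr} against the dilation field $x\cdot\nabla u$ (suitably cut off), identify the scaling weight of each term, and pass to the limit; your constants $\tfrac{4-N}{2}$ for the biharmonic term and $\tfrac{\xi-2N}{2q}$ for the Riesz terms agree with \eqref{ne2}, \eqref{ne3} and \eqref{ne4}. The only substantive difference is how the nonlocal coefficients are obtained: the paper symmetrizes the double integral in $x$ and $y$ and integrates by parts directly in the presence of the cutoff $\varphi_{\kappa,\delta}$, whereas you read the coefficient off the dilation scaling of the Riesz bilinear form and differentiate at $\lambda=1$. Be aware that this scaling shortcut does not combine with the cutoff (the cutoff destroys exact scaling), and differentiating under the integral for a bare $H_0^2(\R^N)$ solution is not justified without decay information; so in the rigorous version your treatment of the nonlocal terms will in practice reduce to the paper's symmetrized integration by parts, with remainders involving $x\cdot\nabla\varphi_R$ and the bounded factor $\frac{(x-y)\cdot\big(x\varphi_R(x)-y\varphi_R(y)\big)}{|x-y|^2}$, which are handled by Hardy--Littlewood--Sobolev and dominated convergence exactly as in the paper. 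Two smaller differences: you compute the biharmonic contribution directly via $\Delta(x\cdot\nabla u)=2\Delta u+x\cdot\nabla\Delta u$, where the paper cites Bhakta's computation; and you use a single cutoff at infinity together with interior $W^{4,r}_{\mathrm{loc}}$ regularity everywhere, whereas the paper (following Bhakta, whose equation has a singular Rellich potential) also cuts off near the origin and only claims smoothness away from $0$ --- for the present equation your interior-regularity route is legitimate and, if anything, slightly cleaner, provided you carry the bootstrap far enough to make $x\cdot\nabla\Delta u$ locally integrable and check that the annulus error terms involving $\nabla u$ and $D^2u$ vanish using $\nabla u\in L^{2N/(N-2)}(\R^N)$ and $D^2u\in L^2(\R^N)$.
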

\begin{proof}
Let us define the cut-off function by
$$
\varphi_{\kappa, \delta}(x)= \phi_{\kappa}(x)\psi_{\delta}(x),
$$
where 
\begin{equation*}
\phi_{\kappa}(x)= \phi\Big(\frac{|x|}{\kappa}\Big) \quad\mbox{ and } \psi_{\delta}(x)= \psi \Big(\frac{|x|}{\delta}\Big),
\end{equation*}
are smooth functions. The functions $ \phi_{\kappa}(x)$ and $\psi_{\delta}(x)$ satisfy the following properties:
\begin{itemize}
\item $0\leq \phi_{\kappa}(x), \psi_{\delta}(x) \leq 1$.
\item ${\rm supp}  \phi= (1, \infty)$ and $ {\rm supp} \psi= (-\infty, 2)$.
\item $\phi(x) =1$ for all $x\geq 2$ and $\psi(x)= 1$ for all $x\leq 1$.
\end{itemize}
Also, $u$ is a smooth function away from the origin (see \cite{GGS2010}) and $(x. \nabla u)\varphi_{\kappa, \delta} \in C_{c}^{3}(\R^N)$. Next, we multiply the equation \eqref{fr} by $(x. \nabla u)\varphi_{\kappa, \delta}$ and we obtain

\begin{equation}\label{ne1}
\intr \Delta^{2} u (x. \nabla u)\varphi_{\kappa, \delta} dx= \intr \Big(|x|^{-\theta}*|u|^{p_{\theta}}\Big)|u|^{p_{\theta}-2}u (x. \nabla u)\varphi_{\kappa, \delta} dx + \alpha \intr \Big(|x|^{-\gamma}*|u|^{p}\Big)|u|^{p-2}u (x. \nabla u)\varphi_{\kappa, \delta} dx.
\end{equation}
Following the similar approach as in \cite{Barxiv}, we get
\begin{equation}\label{ne2}
\lim_{\delta \to \infty}\lim_{\kappa \to 0} \intr \Delta^{2} u (x. \nabla u)\varphi_{\kappa, \delta} dx = -\Big(\frac{N-4}{2}\Big)\intr |\Delta u|^2.
\end{equation}
On the other hand, we have
$$
\begin{aligned}
\intr \Big(|x|^{-\theta}*|u|^{p_{\theta}}\Big)|u|^{p_{\theta}-2}u (x. \nabla u)\varphi_{\kappa, \delta} dx \\
=&\frac{1}{p_{\theta}}\intr\intr |x-y|^{-\theta}|u|^{p_{\theta}}(y) \varphi_{\kappa, \delta}(x) x\cdot \nabla(|u|^{p_{\theta}})(x)dxdy\\
=&\frac{1}{2 p_{\theta}}\intr\intr |x-y|^{-\theta}\Big\{ |u|^{p_{\theta}}(y) \varphi_{\kappa, \delta}(x) x\cdot \nabla(|u|^{p_{\theta}})(x)\\
&+
|u|^{p_{\theta}}(x) \varphi_{\kappa, \delta}(y) y\cdot \nabla(|u|^{p_{\theta}})(y)\Big\}dxdy\\
=&-\frac{1}{p_{\theta}}\intr\intr |x-y|^{-\theta}|u|^{p_{\theta}}(x)|u|^{p_{\theta}}(y)\Big[N\varphi_{\kappa, \delta}(x)+ x\cdot \nabla \varphi_{\kappa, \delta}(x)\Big] dxdy\\
&+\frac{\theta}{2 p_{\theta}}\intr\intr \frac{(x-y)(x\varphi_{\kappa, \delta}(x)-y\varphi_{\kappa, \delta}(y))}{|x-y|^2}|x-y|^{-\theta} |u|^{p_{\theta}}(x)|u|^{p_{\theta}}(y)dx dy,
\end{aligned}
$$
which further yields
\begin{equation}\label{ne3}
\lim_{\delta\to \infty} \lim_{\kappa \to 0} \intr \Big(|x|^{-\theta}*|u|^{p_{\theta}}\Big)|u|^{p_{\theta}-2}u (x. \nabla u)\varphi_{\kappa, \delta} dx =-\frac{2N-\theta}{2p_{\theta}}\intr \Big(|x|^{-\theta}*|u|^{p_{\theta}}\Big)|u|^{p_{\theta}}.
\end{equation}
Similarly, we get
\begin{equation}\label{ne4}
\lim_{\delta\to \infty} \lim_{\kappa \to 0} \intr \Big(|x|^{-\gamma}*|u|^{p}\Big)|u|^{p-2}u (x. \nabla u)\varphi_{\kappa, \delta} dx =-\frac{2N-\gamma}{2p}\intr \Big(|x|^{-\gamma}*|u|^{p}\Big)|u|^{p}.
\end{equation}
Next, passing to limit in \eqref{ne1} and using \eqref{ne2}, \eqref{ne3} and \eqref{ne4}, we get the result.

\end{proof}

\smallskip

\subsection{Completition of Proof of Theorem \ref{nonexis}}
Since, $u$ is a solution of \eqref{fr}, we also have 
\begin{equation}\label{ne6}
\|u\|_{H_{0}^2(\R^N)}^{2}= \intr \Big(|x|^{-\theta}*|u|^{p_{\theta}}\Big)|u|^{p_{\theta}}+\alpha \intr \Big(|x|^{-\gamma}*|u|^{p}\Big)|u|^{p}.
\end{equation}
Using \eqref{ne6} in \eqref{ne5}, we get
\begin{equation*}
0= \Big(\frac{N-4}{2}-\frac{2N-\theta}{2p_{\theta}}\Big)\intr \Big(|x|^{-\theta}*|u|^{p_{\theta}}\Big)|u|^{p_{\theta}}+\alpha \Big(\frac{N-4}{2}-\frac{2N-\gamma}{2p}\Big)\intr \Big(|x|^{-\gamma}*|u|^{p}\Big)|u|^{p}.
\end{equation*}
Now, we use the Hardy-Littlewood-Sobolev inequality together with the fact that the embeddings  $H_{0}^{2}(\R^N) \hookrightarrow L^{\frac{2Np_{\theta}}{2N-\theta}}(\R^N)$ and  $H_{0}^{2}(\R^N) \hookrightarrow L^{\frac{2Np}{2N-\gamma}}(\R^N)$ are continuous in order to obtain
$$
\Big(\frac{2N-\theta-p_{\theta}(N-4)}{2p_{\theta}}\Big)\|u\|_{H_{0}^2(\R^N)}^{2(p_{\theta}-p)}\leq \alpha \Big(\frac{N-4}{2}-\frac{2N-\gamma}{2p}\Big).
$$
The above inequality is imposssible as $\Big(\frac{2N-\theta-p_{\theta}(N-4)}{2p_{\theta}}\Big)\|u\|_{H_{0}^2(\R^N)}^{2(p_{\theta}-p)}> 0$ and on the other hand $p<p_{\gamma}^{*}$. This concludes our proof.

\bigskip

Now, we look into the ground state solutions to \eqref{fr} in the following section.

\medskip

\section{Proof of Theorem \ref{gstate}}
Our proof will be relying on the analysis of the Palais-Smale sequences for ${\mathcal F}_\alpha \!\mid_{\mathcal N_\alpha}$. We will follow the ideas from \cite{CM2016,CV2010} in order to prove that any Palais-Smale sequence of ${\mathcal F}_\alpha \!\mid_{\mathcal N_\alpha}$ is either converging strongly to its weak limit or differs from it by a finite number of sequences, which are the translated solutions of \eqref{squ}. We will be using several nonlocal Brezis-Lieb results which we have presented in Section $2$.
Assume $\alpha> 0$.  For $u,v\in H_{0}^{2}(\R^{N})$  we have
\begin{equation*}
\begin{aligned}
\langle {\mathcal F}'_\alpha (u), v \rangle&= \int_{\R^{N}} \Delta u \Delta v  -\int_{\R^{N}}\Big(|x|^{-\theta}*|u|^{p_{\theta}}\Big)|u|^{p_{\theta}-1}v -\alpha\int_{\R^{N}}\Big(|x|^{-\gamma}*|u|^{p_{\gamma}^{*}}\Big)|u|^{p_{\gamma}^{*}-1}v.
\end{aligned}
\end{equation*}
Also, we have
\begin{equation*}
\begin{aligned}
\langle {\mathcal F}'_\alpha(su), su \rangle &= s^{2}\|u\|_{H_{0}^2(\R^N)}^{2}- s^{2 p_{\theta}}\int_{\R^{N}}\Big(|x|^{-\theta}*|u|^{p_{\theta}}\Big)|u|^{p_{\theta}}-\alpha s^{2p_{\gamma}^{*}}\int_{\R^{N}}\Big(|x|^{-\gamma}*|u|^{p_{\gamma}^{*}}\Big)|u|^{p_{\gamma}^{*}},
\end{aligned}
\end{equation*}
for some $s>0$.

Since $p_{\theta}>p_{\gamma}^{*}> 1$, this gives us that the equation $\langle {\mathcal F}_\alpha'(su),su \rangle= 0 $ has a unique positive solution $s=s(u)$, also known as the {\it projection of $u$} on ${\mathcal N_\alpha}$. Now, we will discuss the main properties of the Nehari manifold ${\mathcal N_\alpha}$:

\begin{lemma}\label{nehari1}
\begin{itemize}
\item [(i)]${\mathcal F}_\alpha \geq c||u||_{H_{0}^{2}(\R^N)}$ for some constant $c>0$, that is, ${\mathcal F}_\alpha$ is coercive. 
\item [(ii)] ${\mathcal F}_\alpha \!\mid_{\mathcal N_\alpha}$ is bounded from below by a positive constant.
\end{itemize}
\end{lemma}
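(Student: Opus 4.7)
The strategy is to exploit the Nehari identity to eliminate exactly one of the nonlocal terms from ${\mathcal F}_\alpha$, in such a way that the remaining expression has only nonnegative contributions with strictly positive coefficients, and then to use the Hardy--Littlewood--Sobolev inequality together with the Sobolev embedding to force a uniform lower bound on $\|u\|_{H_0^2(\R^N)}$ over ${\mathcal N_\alpha}$. The crucial algebraic input is the ordering $p_\theta > p_\gamma^* > 1$ hypothesized in Theorem~\ref{gstate}, which yields
\[
\frac{1}{2p_\theta} < \frac{1}{2p_\gamma^*} < \frac{1}{2}.
\]

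For (i), I would compute, for $u\in {\mathcal N_\alpha}$,
\[
{\mathcal F}_\alpha(u) = {\mathcal F}_\alpha(u) - \frac{1}{2p_\gamma^*}\langle {\mathcal F}_\alpha'(u),u\rangle,
\]
using that $\langle {\mathcal F}_\alpha'(u),u\rangle = 0$ on the manifold. The cubic term coming from the $\alpha$-convolution cancels exactly, and one is left with
\[
{\mathcal F}_\alpha(u) = \Bigl(\tfrac{1}{2} - \tfrac{1}{2p_\gamma^*}\Bigr)\|u\|_{H_0^2(\R^N)}^2 + \Bigl(\tfrac{1}{2p_\gamma^*} - \tfrac{1}{2p_\theta}\Bigr)\int_{\R^N}\bigl(|x|^{-\theta}*|u|^{p_\theta}\bigr)|u|^{p_\theta}\,dx.
\]
Both coefficients are strictly positive by the ordering above, and the nonlocal integral is nonnegative. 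Discarding it gives ${\mathcal F}_\alpha(u) \geq c\|u\|_{H_0^2(\R^N)}^2$ with $c = \tfrac12 - \tfrac{1}{2p_\gamma^*}>0$, which is the coercivity statement (read naturally with a squared norm on the right).

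For (ii), I would use the Nehari identity in the opposite direction,
\[
\|u\|_{H_0^2(\R^N)}^2 = \int_{\R^N}\bigl(|x|^{-\theta}*|u|^{p_\theta}\bigr)|u|^{p_\theta}\,dx + \alpha\int_{\R^N}\bigl(|x|^{-\gamma}*|u|^{p_\gamma^*}\bigr)|u|^{p_\gamma^*}\,dx,
\]
and estimate the right-hand side. Applying the Hardy--Littlewood--Sobolev inequality \eqref{hli} with $r=s=\tfrac{2N}{2N-\theta}$ bounds the first term by a constant times $\|u\|_{\frac{2Np_\theta}{2N-\theta}}^{2p_\theta}$, and symmetrically the second is bounded by a multiple of $\|u\|_{\frac{2N}{N-4}}^{2p_\gamma^*}$, using the identity $\tfrac{2Np_\gamma^*}{2N-\gamma}=\tfrac{2N}{N-4}$. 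The continuous embeddings $H_0^2(\R^N)\hookrightarrow L^{\frac{2Np_\theta}{2N-\theta}}(\R^N)$ and $H_0^2(\R^N)\hookrightarrow L^{\frac{2N}{N-4}}(\R^N)$ from the Note then give
\[
\|u\|_{H_0^2(\R^N)}^2 \leq C_1 \|u\|_{H_0^2(\R^N)}^{2p_\theta} + \alpha\, C_2 \|u\|_{H_0^2(\R^N)}^{2p_\gamma^*}.
\]
Since $2p_\theta, 2p_\gamma^* > 2$ and $u\neq 0$, dividing by $\|u\|_{H_0^2(\R^N)}^2$ forces a uniform positive lower bound $\|u\|_{H_0^2(\R^N)} \geq \rho > 0$ on ${\mathcal N_\alpha}$. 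Combining with the bound from (i) yields ${\mathcal F}_\alpha(u) \geq c\rho^2 > 0$, which is (ii).

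The only bookkeeping to check is that the exponents $\tfrac{2Np_\theta}{2N-\theta}$ and $\tfrac{2Np_\gamma^*}{2N-\gamma}$ lie in the range $\bigl[1,\tfrac{2N}{N-4}\bigr]$ required by the Sobolev embedding: the second equals the critical exponent exactly, and the first is admissible by hypothesis \eqref{p}. There is no genuine obstacle here; the argument is the standard Nehari-manifold two-step, made clean by the strict ordering $p_\theta > p_\gamma^* > 1$ which simultaneously kills the sign problem in (i) and guarantees superlinear control in (ii).
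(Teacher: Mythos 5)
Your proposal is correct and follows essentially the same route as the paper: part (i) via ${\mathcal F}_\alpha(u)-\frac{1}{2p_\gamma^*}\langle {\mathcal F}_\alpha'(u),u\rangle$ on ${\mathcal N}_\alpha$ with the ordering $p_\theta>p_\gamma^*>1$, and part (ii) via the Nehari identity, Hardy--Littlewood--Sobolev, and the embeddings $H_0^2(\R^N)\hookrightarrow L^{\frac{2Np_\theta}{2N-\theta}}(\R^N)$ and $H_0^2(\R^N)\hookrightarrow L^{\frac{2N}{N-4}}(\R^N)$ to get a uniform lower bound on $\|u\|_{H_0^2(\R^N)}$, then combining with (i). Your reading of the coercivity bound with a squared norm matches what the paper's own computation actually yields.
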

\begin{proof} 
\begin{itemize}
\item [(i)] One could notice that
$$
\begin{aligned}
{\mathcal F}_\alpha(u)&= {\mathcal F}_\alpha(u)-\frac{1}{2p_{\gamma}^{*}}\langle {\mathcal F}_\alpha'(u), u \rangle \\
&=\Big(\frac{1}{2}-\frac{1}{2p_{\gamma}^{*}}\Big)\|u\|_{H_{0}^{2}(\R^N)}^2+\Big(\frac{1}{2p_{\gamma}^{*}}-\frac{1}{2.p_{\theta}}\Big)\int_{\R^{N}} \Big(|x|^{-\theta}*|u|^{p_{\theta}}\Big)|u|^{p_{\theta}} \\
&\geq \Big(\frac{1}{2}-\frac{1}{2p_{\gamma}^{*}}\Big)\|u\|_{H_{0}^{2}(\R^N)}^2.
\end{aligned}
$$
We conclude the proof by taking $c=\frac{1}{2}-\frac{1}{2p_{\gamma}^{*}}$.

\smallskip

\item [(ii)] We will be using the Hardy-Littlewood-Sobolev inequality and the fact that the embeddings  $H_{0}^{2}(\R^N) \hookrightarrow L^{\frac{2Np_{\theta}}{2N-\theta}}(\R^N)$ and  $H_{0}^{2}(\R^N) \hookrightarrow L^{\frac{2N}{N-4}}(\R^N)$ are continuous. Let $u\in {\mathcal N_\alpha}$, then we have
$$
\begin{aligned}
0=\langle {\mathcal F}_\alpha'(u),u\rangle& =\|u\|_{H_{0}^{2}(\R^N)}^2-\int_{\R^{N}}\Big(|x|^{-\theta}*|u|^{p_{\theta}}\Big)|u|^{p_{\theta}}-\alpha \int_{\R^{N}}\Big(|x|^{-\theta}*|u|^{p_{\gamma}^{*}}\Big)|u|^{p_{\gamma}^{*}}\\
&\geq \|u\|_{H_{0}^{2}(\R^N)}^2-C\|u\|_{H_{0}^{2}(\R^N)}^{2.p_{\theta}}-C_\alpha \|u\|_{H_{0}^{2}(\R^N)}^{2p_{\gamma}^{*}}.
\end{aligned}
$$
Hence, there exists some constant $C_0>0$ such that
\begin{equation}\label{cnot}
\|u\|_{H_{0}^{2}(\R^N)}\geq C_0>0\quad\mbox{for all }u\in {\mathcal N_\alpha}.
\end{equation}
Next, we use the fact that ${\mathcal F}_\alpha \!\mid_{\mathcal N_\alpha}$ is coercive together with\eqref{cnot} in order to obtain
$$
{\mathcal F}_\alpha(u) \geq \Big(\frac{1}{2}-\frac{1}{2p_{\gamma}^{*}}\Big) C_0^2>0.
$$
\end{itemize}
\end{proof}
	
\begin{lemma}\label{nehari2}
If $u$ is a critical point of ${\mathcal F}_\alpha \!\mid_{\mathcal N_\alpha}$, then it will be a free critical point of ${\mathcal F}_\alpha \!\mid_{\mathcal N_\alpha}$.
\end{lemma}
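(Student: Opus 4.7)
\medskip
\noindent\textbf{Proof proposal.} The natural strategy is the standard Lagrange-multiplier argument on the Nehari constraint. Define
\[
G(u):=\langle {\mathcal F}_\alpha'(u),u\rangle
= \|u\|_{H_{0}^{2}(\R^{N})}^{2}
-\int_{\R^{N}}\bigl(|x|^{-\theta}*|u|^{p_{\theta}}\bigr)|u|^{p_{\theta}}
-\alpha\int_{\R^{N}}\bigl(|x|^{-\gamma}*|u|^{p_{\gamma}^{*}}\bigr)|u|^{p_{\gamma}^{*}},
\]
so that ${\mathcal N_\alpha}=\{u\neq 0:G(u)=0\}$. Since ${\mathcal F}_\alpha\in C^{1}$ and $G\in C^{1}$, if $u$ is a constrained critical point there exists $\lambda\in\R$ with ${\mathcal F}_\alpha'(u)=\lambda\,G'(u)$. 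Pairing this identity with $u$ and using $u\in{\mathcal N_\alpha}$ gives $0=\langle {\mathcal F}_\alpha'(u),u\rangle=\lambda\langle G'(u),u\rangle$. Therefore the whole proof reduces to showing $\langle G'(u),u\rangle\neq 0$, which forces $\lambda=0$ and hence ${\mathcal F}_\alpha'(u)=0$.

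The computation of $\langle G'(u),u\rangle$ is direct: differentiating each term (the convolution terms being homogeneous of degree $2p_{\theta}$ and $2p_{\gamma}^{*}$ respectively) yields
\[
\langle G'(u),u\rangle
=2\|u\|_{H_{0}^{2}(\R^{N})}^{2}
-2p_{\theta}\!\int_{\R^{N}}\!\bigl(|x|^{-\theta}*|u|^{p_{\theta}}\bigr)|u|^{p_{\theta}}
-2\alpha p_{\gamma}^{*}\!\int_{\R^{N}}\!\bigl(|x|^{-\gamma}*|u|^{p_{\gamma}^{*}}\bigr)|u|^{p_{\gamma}^{*}}.
\]
Substituting $\|u\|_{H_{0}^{2}(\R^{N})}^{2}$ from $G(u)=0$ gives
\[
\langle G'(u),u\rangle
=2(1-p_{\theta})\!\int_{\R^{N}}\!\bigl(|x|^{-\theta}*|u|^{p_{\theta}}\bigr)|u|^{p_{\theta}}
+2\alpha(1-p_{\gamma}^{*})\!\int_{\R^{N}}\!\bigl(|x|^{-\gamma}*|u|^{p_{\gamma}^{*}}\bigr)|u|^{p_{\gamma}^{*}}.
\]

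The key remaining step is to verify strict negativity of this quantity. By hypothesis $p_{\theta}>p_{\gamma}^{*}>1$, so both coefficients $1-p_{\theta}$ and $1-p_{\gamma}^{*}$ are strictly negative. The two convolution integrals are nonnegative, and they cannot both vanish: indeed by the Nehari identity their weighted sum equals $\|u\|_{H_{0}^{2}(\R^{N})}^{2}$, which is bounded below away from zero by Lemma \ref{nehari1}(ii), specifically by \eqref{cnot}. Hence $\langle G'(u),u\rangle<0$, which is the one place strict inequalities $p_{\theta},p_{\gamma}^{*}>1$ really matter — this is the only nontrivial obstacle in the argument, but it is handled cleanly by the a priori lower bound on the Nehari level.

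Combining the two steps, $\lambda=0$ and hence ${\mathcal F}_\alpha'(u)=0$ in $H_{0}^{2}(\R^{N})^{*}$, so $u$ is a free critical point of ${\mathcal F}_\alpha$. Geometrically, what the computation shows is that ${\mathcal N_\alpha}$ is a natural constraint: the unique positive projection $s(u)$ discussed before the lemma corresponds to a nondegenerate maximum of $s\mapsto{\mathcal F}_\alpha(su)$, and the nondegeneracy transcribes exactly into $\langle G'(u),u\rangle\neq 0$.
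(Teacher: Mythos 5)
Your proposal is correct and follows essentially the same route as the paper: a Lagrange-multiplier argument reduced to showing $\langle G'(u),u\rangle<0$ on ${\mathcal N_\alpha}$ via the Nehari identity. The only (immaterial) difference is algebraic — you substitute $\|u\|_{H_0^2(\R^N)}^2$ by the sum of the two convolution integrals and use $p_\theta,p_\gamma^*>1$, whereas the paper eliminates the $\alpha$-term and bounds $\langle G'(u),u\rangle\leq-(2p_\gamma^*-2)\|u\|_{H_0^2(\R^N)}^2<0$ using \eqref{cnot}.
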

\begin{proof}
Say $ {\mathcal G}(u)=\langle {\mathcal F}_\alpha'(u),u\rangle $ for any $u \in H_{0}^{2}(\R^N)$. Now, for any $u \in {\mathcal N_\alpha} $ we use \eqref{cnot} to get
\begin{equation}\label{cnot1}
\begin{aligned}
\langle {\mathcal G}'(u),u\rangle&=2\|u\|^2-2(p_{\theta})\int_{\R^{N}}\Big(|x|^{-\theta}*|u|^{p_{\theta}}\Big)|u|^{p_{\theta}}-2p_{\gamma}^{*} \alpha\int_{\R^{N}}\Big(|x|^{-\theta}*|u|^{p_{\gamma}^{*}}\Big)|u|^{p_{\gamma}^{*}}\\
&=(2-2p_{\gamma}^{*})\|u\|_{H_{0}^{2}(\R^N)}^2-2(p_{\theta}-p_{\gamma}^{*})\int_{\R^{N}}\Big(|x|^{-\theta}*|u|^{p_{\theta}}\Big)|u|^{p_{\theta}}\\
&\leq -(2p_{\gamma}^{*}-2)\|u\|_{H_{0}^{2}(\R^N)}^2\\
&<-(2p_{\gamma}^{*}-2)C_0.
\end{aligned}
\end{equation}
Now, let us assume that $u\in {\mathcal N_\lambda}$ is a critical point of ${\mathcal F}_\alpha$ in ${\mathcal N_\alpha}$. Using the Lagrange multiplier theorem, we obtain that there exists $\lambda \in \R$ such that ${\mathcal F}_\alpha'(u)=\lambda {\mathcal G}'(u)$. Hence, we get that $\langle {\mathcal F}_\alpha '(u),u\rangle=\lambda \langle {\mathcal G}'(u),u\rangle$. Next, because $\langle {\mathcal G}'(u),u\rangle<0$, which yields $\lambda=0$ and further, we get ${\mathcal F}_\alpha '(u)=0$.
	
\end{proof}

\begin{lemma}\label{nehari3}
Assume that the sequence $(u_n)$ is a $(PS)$ sequence for ${\mathcal F}_\alpha \!\mid_{\mathcal N_\alpha}$. Then $(u_n)$ is a $(PS)$ sequence for ${\mathcal F}_\alpha$.
\end{lemma}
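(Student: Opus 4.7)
The plan is to follow the standard Nehari manifold scheme: use the Lagrange multiplier theorem on the constraint $\mathcal{G}(u)=\langle\mathcal{F}_\alpha'(u),u\rangle=0$, then show that the multiplier sequence tends to $0$, which by boundedness of $\mathcal{G}'(u_n)$ in the dual will give $\mathcal{F}_\alpha'(u_n)\to 0$ in $H^{-2}(\mathbb{R}^N)$.

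First I would record what boundedness we have. By hypothesis $\mathcal{F}_\alpha(u_n)\to c$ and $\|(\mathcal{F}_\alpha\!\mid_{\mathcal N_\alpha})'(u_n)\|_*\to 0$, and every $u_n$ lies in $\mathcal{N}_\alpha$. By Lemma \ref{nehari1}(i), $\mathcal{F}_\alpha$ is coercive, so $(u_n)$ is bounded in $H_{0}^{2}(\mathbb{R}^N)$. Because $\mathcal{G}$ is a $C^1$ functional built from the $H_{0}^{2}$-norm and the two Hardy--Littlewood--Sobolev-controlled nonlocal terms, boundedness of $(u_n)$ immediately yields a uniform bound $\|\mathcal{G}'(u_n)\|_{H^{-2}}\le M$.

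Next, the Lagrange multiplier theorem (applied as in the proof of Lemma \ref{nehari2}) gives a sequence $\lambda_n\in\mathbb{R}$ with
\begin{equation*}
\mathcal{F}_\alpha'(u_n)-\lambda_n\,\mathcal{G}'(u_n)\longrightarrow 0 \quad\text{in } H^{-2}(\mathbb{R}^N).
\end{equation*}
Testing this with $u_n$ and using $u_n\in\mathcal{N}_\alpha$ (so $\langle\mathcal{F}_\alpha'(u_n),u_n\rangle=0$) produces
\begin{equation*}
\lambda_n\,\langle \mathcal{G}'(u_n),u_n\rangle \longrightarrow 0.
\end{equation*}
Now comes the crucial step, and the only place where care is needed: I need to ensure $\langle\mathcal{G}'(u_n),u_n\rangle$ stays bounded away from $0$. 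This is exactly the content of estimate \eqref{cnot1} together with the lower bound \eqref{cnot}: for any $u\in\mathcal{N}_\alpha$,
\begin{equation*}
\langle\mathcal{G}'(u),u\rangle \le -(2p_\gamma^*-2)\|u\|_{H_{0}^{2}(\mathbb{R}^N)}^2 \le -(2p_\gamma^*-2)C_0^2 < 0,
\end{equation*}
so $\langle\mathcal{G}'(u_n),u_n\rangle$ is uniformly negative. Dividing yields $\lambda_n\to 0$.

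Finally, combining $\lambda_n\to 0$ with the uniform bound $\|\mathcal{G}'(u_n)\|_{H^{-2}}\le M$ gives $\lambda_n\mathcal{G}'(u_n)\to 0$ in $H^{-2}(\mathbb{R}^N)$, and then
\begin{equation*}
\mathcal{F}_\alpha'(u_n) = \bigl(\mathcal{F}_\alpha'(u_n)-\lambda_n\mathcal{G}'(u_n)\bigr) + \lambda_n\mathcal{G}'(u_n) \longrightarrow 0 \quad\text{in } H^{-2}(\mathbb{R}^N),
\end{equation*}
which together with $\mathcal{F}_\alpha(u_n)\to c$ shows $(u_n)$ is a $(PS)$-sequence for the unconstrained functional $\mathcal{F}_\alpha$. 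The main (minor) obstacle is verifying the uniform negativity of $\langle\mathcal{G}'(u_n),u_n\rangle$, but this is precisely supplied by the inequalities already established in Lemmas \ref{nehari1}--\ref{nehari2}, so no new ingredient is needed.
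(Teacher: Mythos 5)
Your argument is correct and is essentially the same as the paper's: boundedness of $(u_n)$ from coercivity, the Lagrange multiplier relation ${\mathcal F}_\alpha'(u_n)-\lambda_n{\mathcal G}'(u_n)=o(1)$, testing with $u_n$ and invoking the uniform negative bound on $\langle {\mathcal G}'(u_n),u_n\rangle$ from \eqref{cnot1} and \eqref{cnot} to get $\lambda_n\to 0$, hence ${\mathcal F}_\alpha'(u_n)\to 0$. You are in fact slightly more explicit than the paper in recording the dual-norm bound on ${\mathcal G}'(u_n)$, which is the step needed to pass from $\lambda_n\to 0$ to $\lambda_n{\mathcal G}'(u_n)\to 0$ in $H^{-2}(\R^N)$.
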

\begin{proof}
Let us assume that $(u_n)\subset {\mathcal N_\alpha}$ be a $(PS)$ sequence for ${\mathcal F}_\alpha \!\mid_{\mathcal N_\alpha}$. Since,
$$
{\mathcal F}_\alpha(u_n)\geq \Big(\frac{1}{2}-\frac{1}{2p_{\gamma}^{*}}\Big)\|u_n\|_{H_{0}^{2}(\R^N)}^2,
$$
which yields $(u_n)$ is bounded in $H_{0}^{2}(\R^N)$. Now, we will prove that ${\mathcal F}'_\alpha(u_n)\to 0$. One could see that,
$$
{\mathcal F}'_\alpha(u_n)- \lambda_n {\mathcal G}'(u_n)= {\mathcal F}'_\alpha \!\mid_{\mathcal N_\alpha}(u_n)= o(1),
$$
for some $\lambda_n \in \R$, which further gives us
$$
\lambda_n \langle {\mathcal G}'(u_n),u_n \rangle= \langle {\mathcal F}_\lambda '(u_n),u_n \rangle + o(1)= o(1).
$$
Next, by the use of \eqref{cnot1}, we deduce that $\lambda_n \to 0$ and this yields ${\mathcal F}_\lambda '(u_n) \to 0$.
\end{proof}

\subsection{A result on compactness}\label{compc}
Let us define the energy functional ${\mathcal R}: H_{0}^{2}(\R^N)\to \R$ by
$$
{\mathcal R}(u)=\frac{1}{2}\|u\|^{2}-\frac{1}{2(p_{\theta})}\intr \Big(|x|^{-\theta}*|u|^{p_{\theta}}\Big)|u|^{p_{\theta}}.
$$
Consider the corresponding Nehari manifold for ${\mathcal R}$ by
$$
{\mathcal N}_{{\mathcal R}}=\{u\in H_{0}^{2}(\R^N) \setminus\{0\}: \langle {\mathcal R}'(u),u\rangle=0\},
$$
and assume that
$$
b_{\mathcal R}=\inf_{u\in {\mathcal N}_{\mathcal R}}{\mathcal R}(u).
$$

And we have,
\begin{equation*}
\begin{aligned}
\langle {\mathcal R}' (u), v \rangle&= \int_{\R^{N}} \Delta u \Delta v  -\int_{\R^{N}}\Big(|x|^{-\theta}*|u|^{p_{\theta}}\Big)|u|^{p_{\theta}-1}v,
\end{aligned}
\end{equation*}
for all $v \in C^{\infty}_{0}(\R^N)$. Also,
\begin{equation*}
\langle {\mathcal R}'(u),u\rangle= \|u\|_{H_{0}^{2}(\R^N)}^2-\int_{\R^{N}}\Big(|x|^{-\theta}*|u|^{p_{\theta}}\Big)|u|^{p_{\theta}}.
\end{equation*}

\begin{lemma}\label{compact}
Suppose that $(u_n)\subset{\mathcal N}_{\mathcal I}$ is a $(PS)$ sequence of ${\mathcal F}_\alpha \!\mid_{{\mathcal N}_{\alpha}}$, that is, $({\mathcal F}_\alpha(u_n))$ is bounded and ${\mathcal F}_\alpha'\!\mid_{{\mathcal N}_{\alpha}}(u_n)\to 0$ strongly in $H_{0}^{-2}(\R^N)$. In such as case, there exists a solution $u\in H_{0}^{2}(\R^N)$ of \eqref{fr} such that, on replacing the sequence $(u_n)$ with the subsequence, we have either of the following alternatives:
	
\smallskip
	
\noindent $(i)$ $u_n\to u$ strongly in $H_{0}^{2}(\R^N)$;
	
or
	
\smallskip
	
\noindent $(ii)$ $u_n\rightharpoonup u$ weakly in $H_{0}^{2}(\R^N)$. Also, there exists a positive integer $l\geq 1$ and $l$ nontrivial  weak solutions to \eqref{squ}, that is, $l$ functions $u_1,u_2,\dots, u_l\in H_{0}^{2}(\R^N)$ and $l$ sequences of points $(q_{n,1})$, $(q_{n,2})$, $\dots$, $(q_{n,l})\subset \R^N$ such that the following conditions hold:
\begin{enumerate}
\item[(a)] $|q_{n,j}|\to \infty$ and $|q_{n,j}-q_{n,i}|\to \infty$  if $i\neq j$, $n\to \infty$;
\item[(b)] $ u_n-\sum_{j=1}^lu_j(\cdot+q_{n,j})\to u$ in $H_{0}^{2}(\R^N)$;
\item[(c)] $ {\mathcal F}_\alpha(u_n)\to {\mathcal F}_{\alpha}(u)+\sum_{j=1}^l {\mathcal R}(u_j)$.
\end{enumerate}
\end{lemma}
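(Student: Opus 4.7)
The plan is to run the classical Struwe global-compactness decomposition, suitably adapted to the nonlocal biharmonic setting using the Brezis-Lieb type machinery developed in Section 2. By Lemma \ref{nehari3}, $(u_n)$ is also a $(PS)$ sequence for $\mathcal{F}_\alpha$ on the whole space. Repeating the argument of Lemma \ref{nehari1}(i) applied to $\mathcal{F}_\alpha(u_n) - \tfrac{1}{2p_\gamma^*}\langle \mathcal{F}_\alpha'(u_n),u_n\rangle$ yields a uniform bound on $\|u_n\|_{H_0^2(\R^N)}$. Passing to a subsequence, I may assume $u_n \rightharpoonup u$ in $H_0^2(\R^N)$ and $u_n \to u$ almost everywhere. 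Using Lemma \ref{anbl} with $c = p_\theta$ and with $c = p_\gamma^*$, one passes to the limit in $\langle \mathcal{F}_\alpha'(u_n), v\rangle$ for each $v\in C_c^\infty(\R^N)$ and concludes that $u$ is a weak solution of \eqref{fr}.

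Set $w_n^1 := u_n - u$, so $w_n^1 \rightharpoonup 0$ in $H_0^2(\R^N)$. The standard Brezis-Lieb lemma for the Hilbert norm, the nonlocal Brezis-Lieb lemma (Lemma \ref{nlocbl}) applied with both $c=p_\theta$ and $c=p_\gamma^*$, and Lemma \ref{anbl} for the duality brackets combine to give
\[
\mathcal{F}_\alpha(u_n) = \mathcal{F}_\alpha(u) + \mathcal{F}_\alpha(w_n^1) + o(1),\qquad \mathcal{F}_\alpha'(w_n^1) \to 0 \ \text{in}\ H_0^{-2}(\R^N).
\]
If $w_n^1\to 0$ strongly, case (i) holds. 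Otherwise, since $w_n^1$ is bounded and does not tend to zero strongly, a suitable subcritical $L^t$-norm ($t\in(2,2N/(N-4))$) cannot vanish in the Lions sense, and Lemma \ref{cc} produces $q_{n,1}\in\R^N$ and $\delta>0$ with $\int_{B_1(q_{n,1})}|w_n^1|^t \geq \delta$. Since $w_n^1 \rightharpoonup 0$, one must have $|q_{n,1}|\to\infty$. Up to a further subsequence, the translated sequence $\widetilde w_n^1 := w_n^1(\cdot + q_{n,1})$ satisfies $\widetilde w_n^1 \rightharpoonup u_1 \neq 0$ in $H_0^2(\R^N)$.

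By translation invariance of $\mathcal{F}_\alpha$, $(\widetilde w_n^1)$ is still a $(PS)$ sequence for $\mathcal{F}_\alpha$ at the corresponding level, and another application of Lemma \ref{anbl} identifies $u_1$ as a nontrivial weak solution. The main obstacle is to check that this limit equation is \eqref{squ} rather than the full \eqref{fr}: the translated sequence a priori produces both a subcritical $p_\theta$-term and a critical $p_\gamma^*$-term in the limit, and one must argue that the critical contribution disappears. The hypotheses $2\theta<\gamma$ (equivalently $p_\theta>p_\gamma^*$) enter precisely here, combined with almost everywhere convergence and the fact $|q_{n,1}|\to\infty$, to show that the $\gamma$-convolution does not accumulate around $u_1$, reducing the bubble equation to \eqref{squ}.

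Finally, iterate: set $w_n^2 := w_n^1 - u_1(\cdot - q_{n,1})$; the same Brezis-Lieb splitting yields
\[
\mathcal{F}_\alpha(u_n) = \mathcal{F}_\alpha(u) + \mathcal{R}(u_1) + \mathcal{F}_\alpha(w_n^2) + o(1),\qquad \mathcal{F}_\alpha'(w_n^2) \to 0.
\]
If $w_n^2\to 0$ strongly in $H_0^2(\R^N)$, conclude with $l=1$; otherwise repeat Steps three and four to extract further bubbles $u_2,\dots$ with associated sequences $q_{n,2},\dots$. A standard overlap argument (if two translation sequences remained at bounded distance, the corresponding translated bubbles would fail to be Brezis-Lieb orthogonal) gives property (a). Each bubble $u_j$ is a nontrivial solution of \eqref{squ}, so the analogue of Lemma \ref{nehari1}(ii) applied to $\mathcal{R}$ on $\mathcal{N}_{\mathcal{R}}$ yields $\mathcal{R}(u_j)\geq b_{\mathcal{R}}>0$. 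Since $\mathcal{F}_\alpha(u_n)$ is bounded, only finitely many extractions are possible: the process terminates at some $l$ after which $w_n^{l+1}\to 0$ strongly, producing properties (b) and (c).
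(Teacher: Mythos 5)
Your overall scaffolding (boundedness, weak limit solving \eqref{fr} via Lemma \ref{anbl}, Brezis--Lieb splitting, Lions-type non-vanishing through Lemma \ref{cc}, translated profiles, iteration terminating because each bubble carries energy at least $b_{\mathcal R}>0$) matches the paper's proof. But there is a genuine gap exactly at the point you yourself flag as ``the main obstacle'': you never prove that the critical $\gamma$-term disappears from the remainder, and the justification you sketch cannot work. The term $\int_{\R^N}\big(|x|^{-\gamma}*|w|^{p_{\gamma}^{*}}\big)|w|^{p_{\gamma}^{*}}$ is invariant under translations (and, at the exponent $p_\gamma^*$, under the critical rescaling as well), so the facts that $|q_{n,1}|\to\infty$, that $2\theta<\gamma$, and that you have almost everywhere convergence do not make ``the $\gamma$-convolution not accumulate around $u_1$'': translating a non-vanishing profile to infinity changes this integral not at all. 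As written, your bubble equation would therefore be \eqref{fr} (or a dilated critical limit profile, which your scheme does not even allow for), not \eqref{squ}, and then neither conclusion (c) nor the termination argument via ${\mathcal R}(u_j)\geq b_{\mathcal R}$ goes through. The same unresolved term also undermines your non-vanishing step: since you only know ${\mathcal F}_\alpha'(w_n^1)\to 0$ rather than ${\mathcal R}'(w_n^1)\to 0$, vanishing of a subcritical $L^t$-norm kills only the $\theta$-term in $\langle {\mathcal F}_\alpha'(w_n^1),w_n^1\rangle=o(1)$ and leaves $\|w_n^1\|^2=\alpha\int\big(|x|^{-\gamma}*|w_n^1|^{p_\gamma^*}\big)|w_n^1|^{p_\gamma^*}+o(1)$, which does not force $w_n^1\to 0$; so even the extraction of $q_{n,1}$ is not justified in your version.

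For comparison, the paper avoids (or at least sidesteps) this issue at the level of the first splitting, before any translation: it claims in \eqref{est6}--\eqref{est8} that the remainder $e_{n,1}=u_n-u$ satisfies ${\mathcal F}_\alpha(u_n)={\mathcal F}_\alpha(u)+{\mathcal R}(e_{n,1})+o(1)$ and $\langle {\mathcal R}'(e_{n,1}),h\rangle=o(1)$, i.e.\ the critical $\gamma$-part of the remainder is asserted to vanish already there, so that all subsequent bubbles are by construction critical points of ${\mathcal R}$ and hence solutions of \eqref{squ}, with energy quanta ${\mathcal R}(u_j)$. Whatever one thinks of how fully the paper argues that step, it is the step your proposal must supply: you need an actual proof that $\int\big(|x|^{-\gamma}*|u_n-u|^{p_\gamma^*}\big)|u_n-u|^{p_\gamma^*}\to 0$ (or else a genuinely Struwe-type analysis including rescaled bubbles for the critical nonlocal term), and deferring it to a translation heuristic is not sufficient.
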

\begin{proof}
As we know that $(u_n)\in H_{0}^{2}(\R^N)$ is a bounded sequence, then there exists $u\in H_{0}^{2}(\R^N)$ such that, up to a subsequence, one could have
\begin{equation}\label{firstconv}
\left\{
\begin{aligned}
u_n& \rightharpoonup u \quad\mbox{ weakly in }H_{0}^{2}(\R^N),\\
u_n &\rightharpoonup u\quad\mbox{ weakly in }L^t(\R^N),\; 1\leq t\leq \frac{2N}{N-4},\\
u_n & \to u\quad\mbox{ a.e. in }\R^N.
\end{aligned}
\right.
\end{equation}
We use \eqref{firstconv} together with Lemma \ref{anbl} and get   $${\mathcal F}_\alpha'(u)=0,$$ which further yields that, $u\in H_{0}^{2}(\R^N)$ is a solution of \eqref{fr}. Next, if $u_n\to u$ strongly in $H_{0}^{2}(\R^N)$ then $(i)$ holds.

\medskip
	
Suppose that $(u_n)\in H_{0}^{2}(\R^N)$ does not converge strongly to $u$ and define $e_{n,1}=u_n-u$. In this case $(e_{n,1})$ converges weakly (not strongly) to zero in $H_{0}^{2}(\R^N)$ and
\begin{equation}\label{bl2}
\|u_n\|_{H_{0}^{2}(\R^N)}^2=\|u\|_{H_{0}^{2}(\R^N)}^2+\|e_{n,1}\|_{H_{0}^{2}(\R^N)}^2+o(1).
\end{equation}
Next, we use Lemma \ref{nlocbl} in order to obtain
\begin{equation}\label{bl3}
\int_{\R^N} \Big(|x|^{-\theta}*|u_n|^{p_{\theta}}\Big)|u_n|^{p_{\theta}}=\intr \Big(|x|^{-\theta}*|u|^{p_{\theta}}\Big)|u|^{p_{\theta}}+\intr \Big(|x|^{-\theta}*|e_{n, 1}|^{p_{\theta}}\Big)|e_{n, 1}|^{p_{\theta}}+o(1). 
\end{equation}
By combining \eqref{bl2} and \eqref{bl3} we get
\begin{equation}\label{est6}
{\mathcal F}_\alpha(u_n)= {\mathcal F}_\alpha(u)+{\mathcal R}(e_{n,1})+o(1).
\end{equation}
Using Lemma \ref{anbl}, for any $h\in H_{0}^{2}(\R^N)$, one could have
\begin{equation}\label{est7}
\langle{\mathcal R}'(e_{n,1}), h\rangle=o(1).
\end{equation}
Further, by the use of Lemma \ref{nlocbl} we have
$$
\begin{aligned}
0=\langle {\mathcal F}_\alpha'(u_n), u_n \rangle&=\langle {\mathcal F}_\alpha'(u),u\rangle+\langle {\mathcal R}'(e_{n,1}), e_{n,1} \rangle+o(1)\\
&=\langle{\mathcal R}'(e_{n,1}), e_{n,1}\rangle+o(1).
\end{aligned}
$$
This gives us
\begin{equation}\label{est8}
\langle {\mathcal R}'(e_{n,1}), e_{n,1}\rangle=o(1).
\end{equation}
On the other hand, we also have
$$
\beta:=\limsup_{n\to \infty}\Big(\sup_{q\in \R^N} \int_{B_1(q)}|e_{n,1}|^{\frac{2Np_{\theta}}{2N-\theta}}\Big)> 0.
$$

Hence, we could find $q_{n,1}\in \R^N$ such that
\begin{equation}\label{est9}
\int_{B_1(q_{n,1})}|e_{n,1}|^{\frac{2Np_{\theta}}{2N-\theta}}>\frac{\beta}{2}.
\end{equation}
Therefore, for any sequence $(e_{n,1}(\cdot+q_{n,1}))$, there exists $u_1\in H_{0}^{2}(\R^N)$ such that, up to a subsequence, one could have 
$$
\begin{aligned}
e_{n,1}(\cdot+q_{n,1})&\rightharpoonup u_1\quad\mbox{ weakly in } H_{0}^{2}(\R^N),\\
e_{n,1}(\cdot+q_{n,1})&\to u_1\quad\mbox{ strongly in } L_{loc}^{\frac{2Np_{\theta}}{2N-\theta}}(\R^N),\\
e_{n,1}(\cdot+q_{n,1})&\to u_1\quad\mbox{ a.e. in } \R^N.
\end{aligned}
$$
Now, we pass to the limit in \eqref{est9} and get
$$
\int_{B_1(0)}|u_{1}|^{\frac{2Np_{\theta}}{2N-\theta}}\geq \frac{\beta}{2},
$$
which yields, $u_1\not\equiv 0$. As $(e_{n,1}) \rightharpoonup 0$ weakly in $H_{0}^{2}(\R^N)$, one could obtain that $(q_{n,1})$ is unbounded. Next, passing to a subsequence, we get that $|q_{n,1}|\to \infty$. Further, using \eqref{est8}, we have ${\mathcal R}'(u_1)=0$, which implies that $u_1$ is a nontrivial solution of \eqref{squ}.
Let us define
$$
e_{n,2}(x)=e_{n,1}(x)-u_1(x-q_{n,1}).
$$
Then, in the same manner as before, we have
$$
\|e_{n,1}\|^2=\|u_1\|^2+\|e_{n,2}\|^2+o(1).
$$
By using Lemma \ref{nlocbl} we get
$$
\int_{\R^N} \Big(|x|^{-\theta}*|e_{n,1}|^{p_{\theta}}\Big)|e_{n,1}|^{p_{\theta}}=\int_{\R^N} \Big(|x|^{-\theta}*|u_1|^{p_{\theta}}\Big)|u_1|^{p_{\theta}}+\intr \Big(|x|^{-\theta}*|e_{n,2}|^{p_{\theta}}\Big)|e_{n,2}|^{p_{\theta}}+o(1). 
$$
Thus, we have
$$
{\mathcal R}(e_{n,1})={\mathcal R}(u_1)+{\mathcal R}(e_{n,2})+o(1).
$$
By \eqref{est6}, we get
$$
{\mathcal F}_\alpha (u_n)= {\mathcal F}_\alpha (u)+{\mathcal R}(u_1)+{\mathcal R}(e_{n,2})+o(1).
$$
Again, we use the same approach as above and get
$$
\langle {\mathcal R}'(e_{n,2}),h\rangle =o(1)\quad\mbox{ for any }h\in H_{0}^{2}(\R^N)
$$
and
$$
\langle {\mathcal R}'(e_{n,2}), e_{n,2}\rangle =o(1).
$$
Next, if $(e_{n,2}) \to 0$ strongly, then by taking $l=1$ in the Lemma \ref{compact} we could finish our proof.

\smallskip 

Let us assume that $e_{n,2}\rightharpoonup 0$ weakly (not strongly) in $H_{0}^{2}(\R^N)$ and one could iterate the process and in $l$ number of steps one could find a set of sequences $(q_{n,j})\subset \R^N$, $1\leq j\leq l$ with 
$$
|q_{n,j}|\to \infty\quad\mbox{  and }\quad |q_{n,i}-q_{n,j}|\to \infty\quad\mbox{  as }\; n\to \infty, i\neq j
$$
and $l$ nontrivial solutions  $u_1$, $u_2$, $\dots$, $u_l\in H_{0}^{2}(\R^N)$ of \eqref{squ} such that, by letting
$$
e_{n,j}(x):=e_{n,j-1}(x)-u_{j-1}(x-q_{n,j-1})\,, \quad 2\leq j\leq l,
$$ 
we obtain
$$
e_{n,j}(x+q_{n,j})\rightharpoonup u_j\quad\mbox{weakly in }\; H_{0}^{2}(\R^N)
$$
and
$$
{\mathcal F}_\alpha(u_n)= {\mathcal F}_\alpha(u)+\sum_{j=1}^l {\mathcal R}(u_j)+{\mathcal R}(y_{n,l})+o(1).
$$
Since, ${\mathcal F}_\alpha (u_n)$ is bounded and ${\mathcal R}(u_j)\geq b_{\mathcal R}$, by iterating the process a finite number of times, we get the desired result.
\end{proof}

\begin{lemma}\label{corr1} 
If any sequence is a $(PS)_d$ sequence of ${\mathcal F}_\alpha \! \mid_{{\mathcal N}_\alpha} $, then it is relatively compact for any $d\in (0,b_{\mathcal R})$ . 
\end{lemma}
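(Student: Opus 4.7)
The plan is to derive this compactness result as a direct consequence of the dichotomy established in Lemma \ref{compact}. Given a $(PS)_d$ sequence $(u_n) \subset {\mathcal N}_\alpha$ with $d \in (0, b_{\mathcal R})$, Lemma \ref{nehari3} tells me that $(u_n)$ is in fact a $(PS)_d$ sequence for the unconstrained functional ${\mathcal F}_\alpha$, so Lemma \ref{compact} applies and produces a weak limit $u \in H_0^2(\mathbb{R}^N)$ solving \eqref{fr} along with two possible alternatives. My task reduces to ruling out alternative $(ii)$.

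First I would record a non-negativity property of the energy at critical points. For any $v \in H_0^2(\mathbb{R}^N)$ with ${\mathcal F}_\alpha'(v) = 0$, the same algebraic manipulation used in Lemma \ref{nehari1}(i) yields
\begin{equation*}
{\mathcal F}_\alpha(v) = \Big(\frac{1}{2} - \frac{1}{2 p_\gamma^{*}}\Big)\|v\|_{H_0^2(\mathbb{R}^N)}^{2} + \Big(\frac{1}{2 p_\gamma^{*}} - \frac{1}{2 p_\theta}\Big)\int_{\mathbb{R}^N}\big(|x|^{-\theta} * |v|^{p_\theta}\big)|v|^{p_\theta},
\end{equation*}
which is non-negative because $p_\theta > p_\gamma^{*} > 1$. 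Applied to the weak limit $u$ of Lemma \ref{compact}, this gives ${\mathcal F}_\alpha(u) \geq 0$.

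Next I would assume for contradiction that alternative $(ii)$ occurs. Each bubble $u_j$ is then a nontrivial critical point of ${\mathcal R}$, so $u_j \in {\mathcal N}_{\mathcal R}$ and ${\mathcal R}(u_j) \geq b_{\mathcal R}$ by the very definition of the infimum. Using part (c) of Lemma \ref{compact},
\begin{equation*}
d \;=\; \lim_{n \to \infty} {\mathcal F}_\alpha(u_n) \;=\; {\mathcal F}_\alpha(u) + \sum_{j=1}^{l} {\mathcal R}(u_j) \;\geq\; 0 + l\, b_{\mathcal R} \;\geq\; b_{\mathcal R},
\end{equation*}
which contradicts the assumption $d < b_{\mathcal R}$. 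Hence only alternative $(i)$ can hold, meaning $u_n \to u$ strongly in $H_0^2(\mathbb{R}^N)$, so $(u_n)$ is relatively compact.

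The main (mild) obstacle is securing the sign ${\mathcal F}_\alpha(u) \geq 0$ for the weak limit, since $u$ could a priori be zero or a nontrivial critical point at a negative level; the Nehari-type identity above dispels this worry cleanly because the coefficients $\tfrac12 - \tfrac{1}{2 p_\gamma^*}$ and $\tfrac{1}{2 p_\gamma^*} - \tfrac{1}{2 p_\theta}$ are both strictly positive under the hypotheses of Theorem \ref{gstate}. Everything else is bookkeeping from Lemma \ref{compact}.
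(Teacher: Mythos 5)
Your argument is correct and follows essentially the same route as the paper: apply the splitting result of Lemma \ref{compact} and rule out alternative $(ii)$ because each bubble satisfies ${\mathcal R}(u_j)\geq b_{\mathcal R}$ while the level is $d<b_{\mathcal R}$, forcing strong convergence. In fact you supply a detail the paper leaves implicit, namely the sign ${\mathcal F}_\alpha(u)\geq 0$ for the weak limit via the Nehari-type identity with coefficients $\tfrac12-\tfrac{1}{2p_\gamma^{*}}$ and $\tfrac{1}{2p_\gamma^{*}}-\tfrac{1}{2p_\theta}$, which is exactly what makes the energy comparison conclusive.
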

\begin{proof}
Suppose that $(u_n)$ is a $(PS)_d$ sequence of ${\mathcal F}_\alpha$ in ${{\mathcal N}_\alpha}$. Next, using the Lemma \ref{compact} we obtain ${\mathcal R}(u_j)\geq b_{\mathcal R}$. Then, upto a subsequence $u_n\to u$ strongly in $H_{0}^{2}(\R^N)$, which further yields that $u$ is a solution of \eqref{fr}. 
\end{proof}

\medskip

To complete the proof of Theorem \ref{gstate}, we require the following result.

\begin{lemma}\label{flg}
$$
b_{\alpha}<b_{\mathcal R}.
$$
\end{lemma}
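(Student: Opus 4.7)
The plan is to compare the two infima by means of a projection between the two Nehari manifolds. Since the nonlinearity associated with $\mathcal R$ is subcritical (the exponent $p_\theta=\frac{2(N-\theta)}{N-4}$ is strictly smaller than the upper critical exponent $\frac{2N-\theta}{N-4}$), a ground state $v\in\mathcal N_{\mathcal R}$ with $\mathcal R(v)=b_{\mathcal R}$ exists by a standard concentration-compactness argument for the biharmonic Choquard problem \eqref{squ} (cf.\ the methods in \cite{CV2010,MV2013}); I would cite this as input. The idea is then to project $v$ onto $\mathcal N_\alpha$ and exploit the fact that the extra nonlocal term, being strictly positive for $\alpha>0$, must strictly lower the energy.

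The first step is to locate the projection. For $s>0$, the Nehari relation $\langle \mathcal F_\alpha'(sv),sv\rangle=0$ reduces (after dividing by $s^2$ and using that $v\in\mathcal N_{\mathcal R}$ so $\|v\|_{H_0^2}^{2}=\int(|x|^{-\theta}*|v|^{p_\theta})|v|^{p_\theta}$) to
\begin{equation*}
\|v\|_{H_0^2}^{2}=s^{2(p_\theta-1)}\|v\|_{H_0^2}^{2}+\alpha\, s^{2(p_\gamma^{*}-1)}\intr\bigl(|x|^{-\gamma}*|v|^{p_\gamma^{*}}\bigr)|v|^{p_\gamma^{*}}.
\end{equation*}
The right-hand side is strictly increasing in $s$ (since $p_\theta,p_\gamma^{*}>1$), vanishes as $s\to 0^{+}$, and at $s=1$ strictly exceeds $\|v\|_{H_0^2}^{2}$ because the $\alpha$-term is positive. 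Hence there is a unique $s_0\in(0,1)$ with $s_0 v\in\mathcal N_\alpha$. The strictness $s_0<1$ is the crucial point that drives the whole argument.

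For the second step I would compute $\mathcal F_\alpha(s_0 v)$ directly, substituting $\|v\|_{H_0^2}^{2}=\int(|x|^{-\theta}*|v|^{p_\theta})|v|^{p_\theta}$ into \eqref{fr1}, to obtain
\begin{equation*}
\mathcal F_\alpha(s_0 v)=\Bigl(\tfrac{s_0^{2}}{2}-\tfrac{s_0^{2p_\theta}}{2p_\theta}\Bigr)\|v\|_{H_0^{2}}^{2}-\tfrac{\alpha\, s_0^{2p_\gamma^{*}}}{2p_\gamma^{*}}\intr\bigl(|x|^{-\gamma}*|v|^{p_\gamma^{*}}\bigr)|v|^{p_\gamma^{*}}.
\end{equation*}
The real-variable function $g(s)=\frac{s^{2}}{2}-\frac{s^{2p_\theta}}{2p_\theta}$ satisfies $g'(s)=s(1-s^{2(p_\theta-1)})$, so $g$ attains its unique maximum at $s=1$ with $g(1)=\frac{1}{2}-\frac{1}{2p_\theta}$. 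Since $s_0\in(0,1)$ strictly, $g(s_0)<g(1)$, and the second term above is strictly negative because $\alpha>0$, $v\not\equiv 0$. Therefore
\begin{equation*}
\mathcal F_\alpha(s_0 v)<\Bigl(\tfrac{1}{2}-\tfrac{1}{2p_\theta}\Bigr)\|v\|_{H_0^{2}}^{2}=\mathcal R(v)=b_{\mathcal R}.
\end{equation*}
Because $s_0 v\in\mathcal N_\alpha$, the definition of $b_\alpha$ yields $b_\alpha\le\mathcal F_\alpha(s_0 v)<b_{\mathcal R}$, as desired.

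The only genuinely delicate point is the attainment of $b_{\mathcal R}$ by some $v\in\mathcal N_{\mathcal R}$; if one prefers not to invoke it, a minor adjustment works: take a minimizing sequence $(v_n)\subset\mathcal N_{\mathcal R}$, so that $\|v_n\|_{H_0^{2}}$ is bounded (and bounded below by \eqref{cnot} applied to $\mathcal R$), project each $v_n$ to $s_n v_n\in\mathcal N_\alpha$ with $s_n\in(0,1)$, and track the gap $\mathcal R(v_n)-\mathcal F_\alpha(s_n v_n)$. Controlling this gap from below by a uniform positive quantity requires a non-vanishing lower bound on $\int(|x|^{-\gamma}*|v_n|^{p_\gamma^{*}})|v_n|^{p_\gamma^{*}}$ along the sequence, which in turn follows from the uniform norm bound \eqref{cnot} together with the Hardy-Littlewood-Sobolev inequality applied appropriately. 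This is the step I expect to be the main technical obstacle in a fully self-contained argument; citing the existence of a ground state for the subcritical biharmonic Choquard \eqref{squ} sidesteps it.
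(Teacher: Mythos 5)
Your proposal is correct and follows essentially the same route as the paper: take a ground state $T$ of the limit problem \eqref{squ} (the paper cites \cite{AN2016} for its existence), project it onto ${\mathcal N}_\alpha$ with projection parameter strictly less than $1$ because of the extra positive $\alpha$-term, and conclude $b_\alpha\le {\mathcal F}_\alpha(s_0T)<{\mathcal R}(T)=b_{\mathcal R}$. The only difference is cosmetic: you finish by discarding the negative $\alpha$-term and using monotonicity of $g(s)=\frac{s^2}{2}-\frac{s^{2p_\theta}}{2p_\theta}$ on $(0,1)$, whereas the paper substitutes the Nehari identity for $sT$ and uses $s<1$ together with $p_\theta>p_\gamma^{*}$; both computations are valid.
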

\begin{proof}
Suppose that ground state solution of \eqref{squ} is denoted by $T\in H_{0}^{2}(\R^N)$ and such solution exists (see \cite{AN2016} and references therein). Assume that $sT$ is the projection of $T$ on ${\mathcal N_\alpha}$, that is, $s=s(T)>0$ is the unique real number such that $sT\in {\mathcal N_\alpha}$. As, $T\in {\mathcal N}_{\mathcal R}$ and $sT\in {\mathcal N_\alpha}$, we get
\begin{equation}\label{g1}
||T||^2= \int_{\R^N} \Big(|x|^{-\theta}*|T|^{p_{\theta}}\Big)|T|^{p_{\theta}}
\end{equation}
and
$$
s^2\|T\|^2=s^{2(p_{\theta})}\int_{\R^N} \Big(|x|^{-\theta}*|T|^{p_{\theta}}\Big)|T|^{p_{\theta}}+ \alpha s^{2p_{\gamma}^{*}}\int_{\R^N} \Big(|x|^{-\theta}*|T|^{p_{\gamma}^{*}}\Big)|T|^{p_{\gamma}^{*}}.
$$
One could easily see that $s<1$ from the above two inequalities. Hence, we get
	
\begin{equation*}
\begin{aligned}
b_\alpha \leq {\mathcal F}_\alpha(sT)&=\frac{1}{2}s^{2}\|T\|^{2}-\frac{1}{2(p_{\theta})}s^{2(p_{\theta})}\int_{\R^N} \Big(|x|^{-\theta}*|T|^{p_{\theta}}\Big)|T|^{p_{\theta}}- \frac{\alpha}{2p_{\gamma}^{*}}s^{2p_{\gamma}^{*}} \int_{\R^N} \Big(|x|^{-\theta}*|P|^{p_{\gamma}^{*}}\Big)|T|^{p_{\gamma}^{*}}\\
&= \Big(\frac{s^{2}}{2}-\frac{s^{2(p_{\theta})}}{2(p_{\theta})}\Big)\|T\|^{2}-\frac{1}{2p_{\gamma}^{*}}\Big(s^2||T||^2-s^{2(p_{\theta})}\int_{\R^N} \Big(|x|^{-\theta}*|T|^{p_{\theta}}\Big)|T|^{p_{\theta}}\Big)\\
&= s^{2} \Big(\frac{1}{2}-\frac{1}{2p_{\gamma}^{*}}\Big)\|T\|^{2}+s^{2(p_{\theta})}\Big(\frac{1}{2p_{\gamma}^{*}}-\frac{1}{2(p_{\theta})}\Big)\|T\|^{2}\\
&< \Big(\frac{1}{2}-\frac{1}{2p_{\gamma}^{*}}\Big)\|T\|^{2}+\Big(\frac{1}{2p_{\gamma}^{*}}-\frac{1}{2(p_{\theta})}\Big)\|T\|^{2}\\
&< \Big(\frac{1}{2}-\frac{1}{2(p_{\theta})}\Big)\|T\|^{2} ={\mathcal R}(T)= b_{\mathcal R}.
\end{aligned}
\end{equation*}
Therefore, we get the desired result.
\end{proof}

Now, by Ekeland variational principle, for any $n\geq 1$ there exists $(u_n) \in {\mathcal N}_\alpha$ such that
\begin{equation*}
\begin{aligned}
{\mathcal F}_\alpha(u_n)&\leq b_\alpha+\frac{1}{n} &&\quad\mbox{ for all } n\geq 1,\\
{\mathcal F}_\alpha(u_n)&\leq {\mathcal F}_\alpha(\tilde{u})+\frac{1}{n}\|\tilde{u}-u_n\| &&\quad\mbox{ for all } \tilde{u} \in {\mathcal N}_\alpha \;\;,n\geq 1.
\end{aligned}
\end{equation*}
From here onwards, one could easily obtain that $(u_n) \in {\mathcal N}_\alpha$ is a $(PS)_{b_\alpha}$ sequence for ${\mathcal F}_\alpha$ on ${\mathcal N}_\alpha$. Next, by combining Lemma \ref{flg} and Lemma \ref{corr1} we deduce that, up to a subsequence $u_n \to u$ strongly in $H_{0}^{2}(\R^N)$ which is a ground state solution of the ${\mathcal F}_\alpha$.

\end{document}